\theoremstyle{definition}
\newtheorem{theorem}{Theorem}
\newtheorem{corollary}[theorem]{Corollary}
\newtheorem{proposition}[theorem]{Proposition}
\newtheorem{lemma}[theorem]{Lemma}
\newtheorem{definition}[theorem]{Definition}
\newtheorem{example}[theorem]{Example}
\newtheorem{notation}[theorem]{Notation}
\newtheorem{remark}[theorem]{Remark}
\newcommand{\numberset}{\mathbb}
\newcommand{\R}{\numberset{R}}
\newcommand{\F}{\numberset{F}}
\newcommand{\Pro}{\numberset{P}}
\newcommand{\Ol}{\mathcal{O}}
\begin{document}

\title[]{On the geometry of Hermitian one-point codes}

\author{Edoardo Ballico$^1$}
\address{Department of Mathematics, University of Trento\\Via Sommarive 14,
38123 Povo (TN), Italy}
\email{$^1$edoardo.ballico@unitn.it}

\author{Alberto Ravagnani$^2$$^*$}
\address{Institut de Math\'{e}matiques, Universit\'{e} de 
Neuch\^{a}tel\\Rue
Emile-Argand 11, CH-2000 Neuch\^{a}tel, Switzerland}
\email{$^2$alberto.ravagnani@unine.ch}

\thanks{$^1$Partially supported by MIUR and GNSAGA}
\thanks{$^*$Corresponding author}
\subjclass[2010]{94B27; 14C20; 11G20}
\keywords{Hermitian curve; Goppa code; one-point code; minimum-weight codeword}

\maketitle

\providecommand{\bysame}{\leavevmode\hbox to3em{\hrulefill}\thinspace}

\begin{abstract}
 Here we describe the Algebraic Geometry of one-point codes arising from the
Hermitian curve. In particular, a geometric characterization of the
minimum-weight codewords 
of 
their dual codes is provided, including explicit closed formulas for their
number. 
We discuss also some natural improvements of the duals of Hermitian 
one-point codes by means of geometric arguments. Finally, some cohomological tools are
developed to characterize the small-weight codewords of such codes.
 
\end{abstract}

\tableofcontents

\setcounter{section}{-1}

\section{Introduction}
The aim of this paper is to use algebraic-geometric techniques to describe the
dual codes
of one-point codes on the Hermitian curve. Classical tools of Algebraic Geometry
have been
recently shown to be of great interest also for coding-theoretic purposes (see,
in particular,
\cite{c3}). The paper by A. Couvreur provides a geometric way to
lower-bound the dual
minimum distance of a wide class of evaluation codes arising from geometric
constructions. 
Here we restrict to the case of curves in the projective 
plane, and improve the cited work for the special case of one-point codes
arising from
the Hermitian curve. These codes are probably the most studied
algebraic-geometric codes. Our improvement is essentially due to the
introduction of
zero-dimensional
subschemes of the plane in the framework of \cite{c3}. This choice allows us to
give
a cohomological interpretation to Hermitian one-point codes and study them by
means of classical geometric tools.

\subsection{Main references}
 The minimum distance of
Hermitian one-point codes was completely determined in \cite{ST}.
Section 8.3 of \cite{Sti} is devoted to the study of such codes,
and most of their properties can be found there. The interest in the dual
minimum-weight codewords of Hermitian one-point codes dates back to \cite{SMP},
whose results have been recently extended in \cite{MPS}. For codes arising from
higher-degree places on the Hermitian curve, see \cite{Kor}. Improvements of
Goppa codes arising from the Hermitian curve have been recently studied in
\cite{DK}. Efficient decoding (and list-decoding) algorithms for Hermitian
one-point codes are known and well-studied in \cite{LO1}, \cite{LO2} and
\cite{OS}. A self-contained reference for codes arising from Algebraic
Geometry is the book by H. Stichtenoth (\cite{Sti}), which treats the
topic from the point of view of function fields. See \cite{Ste} for a more
geometric approach. Finally, this paper was inspired by the powerful
algebraic-geometric techniques introduced by A. Couvreur in \cite{c3}.

\subsection{Layout of the paper} The paper is organized as follows.
In Section \ref{preli} we recall the basic definitions of Coding Theory. In 
particular, we introduce Goppa codes on projective curves and briefly describe
the properties we are interested in. The Hermitian curve is defined in Section \ref{intr},
where we 
summarize some well-known results about one-point codes on it and give them a
cohomological interpretation.
Section \ref{secgeo} collects some preliminary results about the 
intersections of the Hermitian curve with lines and conics in the projective
plane.
In Section \ref{dualmindist} we provide a geometric characterization of the
dual minimum distance of codes arising from plane smooth curves,
and describe in details the case Hermitian codes by using the particular geometry
of tangent lines to the Hermitian curve. In Section
\ref{secminimum} we 
study the supports of the minimum-weight codewords of the dual codes of
Hermitian
one-point codes, providing explicit formulas for their number. The enumeration process
combines tools of classical and finite projective geometry. The analysis includes
come computational examples.
Natural improvements of the dual codes of
Hermitian one-point
codes are discussed in Section \ref{impro}, showing that the geometric approach offers
a complete control on the parameters of such constructions. Some interesting results on 
the dual small-weight codewords
of Hermitian one-point codes are stated in Section \ref{secsmall}. The proofs,
rather technical, are 
given in the algebraic-geometric Appendix \ref{appx}.

\section{Codes and algebraic curves}\label{preli}
Let $q$ be a prime power and let $\F_q$ denote the finite field with $q$
elements.
 A $q$-ary \textbf{linear code}
of dimension $k$ and length $n$ is simply a $k$-dimensional subspace of
$\F_q^n$.
We omit the adjective ``linear'' for the rest of the paper. The elements of
a code are called \textbf{codewords}, or simply \textbf{words}. Endow $\F_q^n$
with a 
metric space
 structure by defining the \textbf{Hamming
distance} $d:\F_q^n \times \F_q^n \to \R$  as $d(v,w):=|\{ 1 \le i \le n : v_i
\neq w_i \}|$, 
for any
$v=(v_1,...,v_n), w=(w_1,...,w_n) \in \F_q^n$.
By definition, the \textbf{weight} of a vector $v \in \F_q^n$ is its distance
from the $0$ vector,
 i.e. $\mbox{wt}(v):=d(v,0)=
|\{ 1 \le i \le n : v_i \neq 0 \}|$. The \textbf{minimum distance} of a code
$C \subseteq \F_q^n$ of at least two codewords is defined by
$$d(C):=\min_{v\neq w \in C} d(v,w) = \min_{v \in C \setminus \{ 0\}} \mbox{wt}
(v).$$ 
The minimum
distance of $\{0\}$ is taken to be, by definition, $\infty$.
The correction capability $c(C)$ of a code $C\subseteq \F_q^n$ is strictly
related to its minimum
 distance
 through the formula $c(C)=\lfloor (d(C)-1)/2\rfloor$ (for a minimum distance
decoder). 
For this reason, codes whose
minimum distance is high are very interesting for applications. 
The \textbf{weight distribution} of
a given code $C \subseteq \F_q^n$ is the collection ${\{ A_i(C)\}}_{i=0}^{n}$,
where $A_i(C)$ is 
given
by $A_i(C):=|\{ c \in C: \mbox{wt}(c)=i \}|$.
Define the \textbf{product} of $v=(v_1,...,v_n), w=(w_1,...,w_n) \in \F_q^n$ by 
$v \cdot w:=\sum_{i=1}^n v_i\cdot w_i$. The \textbf{dual code}
of a code $C \subseteq \F_q^n$ is denoted and defined by 
$C^\perp:=\{ v \in \F_q^n : v \cdot c=0 \mbox{ for any } c \in C\}$.
The set $C^\perp$ is a linear subspace of $\F_q^n$ of dimension
$n-\dim C$, i.e., a $q$-ary code of
dimension $n-\dim C$.

\begin{definition}
 Codes $C,D \subseteq \F_q^n$ are said to be \textbf{strongly isometric} if
$C=vD$, where
$v \in \F_q^n$ is a vector of non-zero components and 
$$vD:=\{ (v_1d_1,v_2d_2,...,v_nd_n) : (d_1,d_2,...,d_n) \in D \}.$$
\end{definition}
\begin{remark} \label{rrrrr}
 A strong isometry is an equivalence relation of codes. Strongly isometric
codes have the same
minimum distance and the same weight distribution. Two codes are strongly
isometric
if and only if their dual codes are strongly isometric. A strong isometry
preserves the supports 
of the codewords. In Geometric Coding Theory, studying codes
 up to strong isometries is a well-established praxis (see \cite{mp} for 
details).
\end{remark}

Here we recall the definition of Goppa code and state some well-known properties of codes
arising from projective curves.

\begin{definition}\label{Goppacode}
 Let $q$ be a prime power and let $\Pro^k$ be the projective space of dimension
$k$ over the field
 $\F_q$. Consider a smooth curve $X\subseteq \Pro^k$ defined over $\F_q$ and a
divisor $D$ on it. 
Take points $P_1,...,P_n \in X(\F_q)$ not lying in the support of $D$ and set
 $\overline{D}:=\sum_{i=1}^n P_i$.   
The \textbf{Goppa code} $C(\overline{D},D)$ is defined as the code obtained
evaluating the Riemann-Roch space 
space $L(D)$ at the points $P_1,...,P_n$.
\end{definition}
The construction of Definition \ref{Goppacode} was proposed in 1981
 by the Russian mathematician V. Goppa.
For a geometric introduction to Goppa codes see \cite{Ste} and \cite {Sti}.

\begin{definition}\label{spoint}
 Take the setup of Definition \ref{Goppacode}. Choose $s$ distinct
$\F_q$-rational points
of $X$, say $P_1,...,P_s$, and $s$ integers $a_1,...,a_s$.
Set $D:=\sum_{i=1}^s P_i$ and $\overline{D}:=\sum_{P \in X(\F_{q}) \setminus
\mbox{\footnotesize{Supp}}
 (D)} P$.
The code $C(\overline{D},D)$ is said to be an \textbf{$s$-point} code on $X$.
\end{definition}

Goppa codes are known to have good parameters and a designed minimum distance
(the so-called
Goppa bound, see \cite{Ste} again). From the true definition we see that curves
carrying
many rational points may give very interesting codes through Goppa's
construction, because of 
their length. 
A very useful fact in geometric Coding Theory is that linear equivalent divisors
give rise to strongly isometric Goppa codes.
\begin{remark}\label{ct}
 Take the setup of Definition \ref{Goppacode}. Let $D$ and $D'$ be divisors on
$X$ and take points 
$P_1,...,P_n \in X(\F_q)$ 
which do not appear neither in the support of $D$, nor in the support of $D'$.
 Set $\overline{D}:=\sum_{i=0}^n P_i$. It is known (see \cite{mp}, Remark 2.16)
that if
$D$ and $D'$ are linear equivalent divisors
 then  ${C}(\overline{D},D)$ and ${C}(\overline{D},D')$ are strongly isometric
codes.
In particular, ${C}(\overline{D},D)^\bot$ and ${C}(\overline{D},D')^\bot$ are
strongly isometric codes (see Remark \ref{rrrrr}).
\end{remark}

\section{Hermitian one-point codes} \label{intr}

Let $q$ be a prime power and let $\Pro^2$ denote the projective plane over the
field $\F_{q^2}$. 
Let $X \subseteq \Pro^2$ be the Hermitian curve (see \cite{Sti}, Example VI.3.6)
of 
affine equation $$y^q+y=x^{q+1}.$$ It is well-known that $X$ is a maximal curve
carrying 
$q^3+1$ $\F_{q^2}$-rational points (see for instance \cite{Ste}). Let $P_\infty$
be the only 
point at infinity of $X$, of projective coordinates $(0:1:0)$. 

\begin{notation} \label{cm}
 Let $m>0$ be an integer. We denote by $C_m$ be code obtained evaluating the
Riemann-Roch space 
$L(mP_\infty)$ on $B:=X(\F_{q^2})\setminus\{ P_\infty\}$. By Definition
\ref{spoint}, $C_m$ is
a one-point code on the Hermitian curve $X$, i.e., a \textbf{Hermitian one-point
code}.
\end{notation}
 Let $C_m$ be as in Notation \ref{cm}. It is well-known that $C_m^\bot$
(the dual code of $C_m$), is $C_{m_\bot}$, where $m_\bot$ is defined by
$m_\bot:=q^3+q^2-q-2-m$ 
(see \cite{Sti}, Theorem 2.2.8). The minimum distance of such codes has been
completely 
determined in \cite{ST}. Table \ref{tabsti} gives explicit formulas for 
the minimum distance of any non-trivial code $C_m$.

\begin{table}[h!!]
\centering

\caption{Minimum distance of any non-trivial code $C_m$.}

\scriptsize{
\begin{tabular}[h!]{|p{0.9cm}|p{3.5cm}|p{5.8cm}|}
\hline
Phase & Values of $m$ & Minimum distance  \\
\hline
\hline

\ \newline \ \newline 1 & \ \newline $0 < m < q^2-q$ \newline $m=\alpha q+\beta$
\newline $0 \leq \beta <q$ \newline \ &
\ \newline $q^3-\alpha(q+1)$, \mbox{ if $m<q$ or $m \ge q$ and $\alpha\le
\beta$}\newline \ \newline  $q^3-\beta-\alpha q$,  \mbox{ if $m \ge q$ and
$\alpha >\beta$}\newline \  \\

\hline

 \ \newline 2 \newline  & \ \newline $q^2-q \leq m < q^3-q^2$ \newline & \
\newline $q^3-m$ \newline \\

\hline

\ \newline \ \newline 3 \newline & \ \newline $q^3-q^2 \leq m < q^3$ \newline
$m=q^3-q^2+aq+b$ \newline $0 \leq a < b \leq q-1$ \newline & \ \newline \newline
$q^3-m$ \newline \\

\hline

\ \newline  \ \newline 4 & \ \newline $q^3-q^2 \leq m < q^3$ \newline
$m=q^3-q^2+aq+b$ \newline $0 \leq b \leq a \leq q-1$ \newline & \ \newline
\newline $q^3-m+b$ \newline \\

\hline
\ \newline \ \newline 5 &\ \newline $q^3 \leq m \leq q^3+q^2-q-2$ \newline
$m_\perp=\alpha q+\beta$ \newline $0 \leq \beta <q$ \newline \  & \ \newline
$\alpha +2$, \mbox{ if $m_\perp <q$ or $m_\perp \ge q$ and $\alpha\le
\beta$}\newline \ \newline  $\alpha +1$,  \mbox{ if $m_\perp \ge q$ and $\alpha
>\beta$}\newline \  \\

\hline

\end{tabular}
}

\label{tabsti}
\end{table}

As pointed out in the Introduction, the results of this paper are based upon
a geometric interpretation of Hermitian one-point codes. Our point of view is
explained in the following important note.

\begin{remark}\label{coho}
Let $m>0$ be an integer. A basis of the Riemann-Roch space $L(mP_\infty)$ is
given by the 
monomials
$$\{ x^iy^j \ : \ 0 \le j \le q-1 , \ iq+j(q+1) \le m \}$$
(see \cite{Sti}, proof of Proposition 8.3.2). 
Since $X$ is a maximal curve, for any $P \in X(\F_{q^2})$ we have an
isomorphism 
of sheaves $\Ol_X(1) \cong \mathcal{L}((q+1)P)$, the latter one being the
invertible 
sheaf associated to the divisor $(q+1)P$ on $X$. 
In other words, and more concretely, for any $d>0$ the Riemann-Roch space 
$L(d(q+1)P_\infty)$ is exactly the vector space $H^0(X,\Ol_X(d))$ of all the
degree
$d$ homogeneous forms defined on $X$. 
Moreover, for any integer $m>0$ there exists a 
unique pair of integers $(d,a)$ such that $m=d(q+1)-a$ and $0 \le a \le q$.
In this case we clearly have $d>0$. Write a linear equivalence $mP_\infty \sim
d(q+1)P_\infty-aP_\infty$. Set 
$E:=aP_\infty$ and denote by $H^0(X,\Ol_X(d)(-E))$ the vector space of all the
degree $d$
homogeneous forms defined on $X$ and vanishing on the scheme $E$ (this notation
is the standard one of algebraic geometers). We see
that $C_m$ is strongly isometric to the code obtained evaluating 
 $H^0(X,\Ol_{X}(d)(-E))$ on $B$. We denote this code by $C(d,a)$.
\end{remark}

The cohomological structure uderlined in Remark \ref{coho} will be studied in depth
in Section \ref{dualmindist}, after having summarized the main properties of the Hermitian curve.

\section{Projective geometry of the Hermitian curve}
\label{secgeo}
In this section we collect some results describing the intersections of
the Hermitian curve $X\subseteq \Pro^2$ with lines and conics in the plane.

\begin{lemma}
\label{intrette}
Let $X$ be the Hermitian curve. Every line $L$ of $\Pro^2$ either intersects $X$
in $q+1$ distinct $\F_{q^2}$-rational points, or $L$ is tangent to $X$ at a
point $P$ (with contact order $q+1$). In the latter case $L$ does not intersect
$X$ in any other $\F_{q^2}$-rational point different from $P$.
\end{lemma}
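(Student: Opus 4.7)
The plan is to combine B\'ezout's theorem with a direct coordinate analysis. Since $X$ is a smooth plane curve of degree $q+1$, B\'ezout guarantees that for any line $L\subset\Pro^2$ the cycle $L\cdot X$ has total degree $q+1$. I would split into three cases according to the position of $L$: the line at infinity, a vertical affine line, and a non-vertical affine line.

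The case $L=\{Z=0\}$ is immediate: substituting into the projective equation $Y^qZ+YZ^q=X^{q+1}$ yields $X^{q+1}=0$, so $L$ meets $X$ only at $P_\infty$ with multiplicity $q+1$, and a partial-derivative check at $P_\infty$ confirms that $L$ is the tangent at $P_\infty$. For a vertical line $x=\gamma$ with $\gamma\in\F_{q^2}$, I would reduce to $y^q+y=\gamma^{q+1}$, noting that $\gamma^{q+1}$ is the $\F_{q^2}/\F_q$ norm of $\gamma$ and thus lies in $\F_q$. Since $y\mapsto y^q+y$ is an $\F_q$-linear surjection $\F_{q^2}\to\F_q$ of kernel size $q$, this produces $q$ distinct affine $\F_{q^2}$-rational intersections; adjoining $P_\infty$ (which contributes multiplicity one by B\'ezout) yields the required $q+1$ distinct rational points.

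The heart of the argument is the non-vertical case $L: y=\alpha x+\beta$ with $\alpha,\beta\in\F_{q^2}$, where substitution into $y^q+y=x^{q+1}$ gives
$$x^{q+1}-\alpha^q x^q-\alpha x-(\beta^q+\beta)=0.$$
I would then exploit the identity
$$(x-\alpha^q)^{q+1}=x^{q+1}-\alpha x-\alpha^q x^q+\alpha^{q+1},$$
which follows from $(x-\alpha^q)^q=x^q-\alpha^{q^2}=x^q-\alpha$. The translation $u=x-\alpha^q$ reduces the equation to the norm equation $u^{q+1}=\lambda$, with $\lambda:=\alpha^{q+1}+\beta^q+\beta\in\F_q$. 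If $\lambda=0$, the unique solution $u=0$ has multiplicity $q+1$ and corresponds to a single $\F_{q^2}$-rational point at which $L$ is tangent with contact order $q+1$. If $\lambda\neq 0$, the fact that the norm map $\F_{q^2}^*\to\F_q^*$, $u\mapsto u^{q+1}$, is a surjective group homomorphism with kernel of order $q+1$ produces exactly $q+1$ distinct solutions in $\F_{q^2}^*$, hence $q+1$ distinct $\F_{q^2}$-rational intersection points.

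The main obstacle, though essentially computational, is to spot the translation $u=x-\alpha^q$ that exposes the norm form on the right-hand side. Once this is done, the dichotomy asserted in the lemma becomes a transparent consequence of the surjectivity and kernel structure of the norm $\F_{q^2}^*\to\F_q^*$, while the remaining assertions (contact order exactly $q+1$ and the absence of further $\F_{q^2}$-rational intersections on a tangent line) follow immediately from unique factorization in $\F_{q^2}[x]$.
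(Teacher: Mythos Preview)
Your argument is correct. The case split is exhaustive, the substitution $u=x-\alpha^{q}$ is exactly right (since $\alpha^{q^2}=\alpha$ for $\alpha\in\F_{q^2}$, one has $(x-\alpha^{q})^{q+1}=x^{q+1}-\alpha^{q}x^{q}-\alpha x+\alpha^{q+1}$), and the value $\lambda=\alpha^{q+1}+\beta^{q}+\beta$ lies in $\F_q$ because it is a sum of a norm and a trace; the dichotomy then follows from the fact that the norm $\F_{q^2}^{\ast}\to\F_q^{\ast}$ is surjective with kernel of order $q+1$. The remaining claims (that contact order $\ge 2$ forces $L$ to be the tangent line at a smooth point, and that a $(q+1)$-fold root leaves no further rational intersections) are immediate.

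By contrast, the paper does not prove this lemma at all: it simply cites Hirschfeld's \emph{Projective Geometries over Finite Fields}, Lemma~7.3.2(i). So your approach is not merely different but strictly more informative for a reader who does not have that reference at hand. The trade-off is that the citation situates the result within the classical theory of Hermitian varieties (unitals) over finite fields, where it is a standard structural fact, whereas your direct computation makes the paper self-contained and highlights the mechanism---the norm equation $u^{q+1}=\lambda$---that will reappear implicitly whenever one counts intersections of $X$ with low-degree curves.
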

\begin{proof}
See \cite{pj}, part (i) of Lemma 7.3.2, at page 247.
\end{proof}

\begin{lemma}
\label{intcurve}
Let $X$ be the Hermitian curve. Fix an integer $e\in \{2,\dots
,q+1\}$ and a rational point $P\in 
 X(\mathbb {F}_{q^2})$. Let $E\subseteq X$ be the divisor $eP$, seen as a closed
degree $e$ subscheme of $\mathbb {P}^2$. Denote by $L_{X,P}\subseteq \mathbb
{P}^2$ be the tangent line
 to $X$ at $P$. It $T\subseteq \mathbb {P}^2$ is an effective divisor
 (i.e., a plane curve, possibly with multiple components)
 of degree $\le e-1$ and containing $E$, then $L_{X,P}\subseteq T$. In other
words, $L_{X,P}$ is one of the components of $T$.
\end{lemma}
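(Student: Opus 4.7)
The plan is to argue by contradiction using Bézout's theorem in $\Pro^2$. The crucial observation is that the tangent line $L_{X,P}$ itself already contains the subscheme $E=eP$ of $\Pro^2$: by Lemma \ref{intrette}, the contact order of $L_{X,P}$ with $X$ at $P$ equals $q+1$, and $e\le q+1$ by hypothesis.

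First I would make precise the scheme-theoretic inclusion $E\subseteq L_{X,P}$ inside $\Pro^2$. Choose affine coordinates $(u,v)$ centered at $P$ with $L_{X,P}$ of equation $v=0$; then near $P$ the Hermitian curve admits a local equation $v=u^{q+1}w(u)$ with $w\in\Ol_{\Pro^2,P}$ a unit, since $L_{X,P}$ is tangent to $X$ at $P$ with contact order $q+1$. The ideal of $E$ in $\Ol_{\Pro^2,P}$ is $\bigl(u^e,\; v-u^{q+1}w(u)\bigr)$, and because $e\le q+1$ the generator $v$ of the ideal of $L_{X,P}$ lies in this ideal. Hence $E\subseteq L_{X,P}$ as closed subschemes of $\Pro^2$.

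Next, suppose for contradiction that $L_{X,P}$ is not a component of $T$. Then $T$ and $L_{X,P}$ meet properly, so Bézout's theorem yields
\[
\mathrm{length}(T\cap L_{X,P}) \;=\; \deg T\cdot \deg L_{X,P} \;=\; \deg T \;\le\; e-1.
\]
On the other hand, $E\subseteq T$ by hypothesis and $E\subseteq L_{X,P}$ by the previous step, so the zero-dimensional subscheme $E$ of length $e$ is contained in $T\cap L_{X,P}$. This forces $\mathrm{length}(T\cap L_{X,P})\ge e$, contradicting the Bézout bound.

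The only real subtlety is the scheme-theoretic containment $E\subseteq L_{X,P}$: the statement ``$L_{X,P}$ has contact order $q+1$ with $X$ at $P$'' is classical on the curve, but I must translate it into an inclusion of ideals in $\Ol_{\Pro^2,P}$ (not merely in $\Ol_{X,P}$), which is precisely what the local coordinate computation above delivers. Once this is granted, the argument reduces to a straightforward length comparison via Bézout.
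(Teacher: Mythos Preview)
Your argument is correct and follows essentially the same route as the paper's own proof: both establish $E\subseteq L_{X,P}$ from the contact order $q+1\ge e$, and then conclude via B\'ezout that $L_{X,P}$ must be a component of $T$ since $\deg(E)>\deg(T)$. The only difference is that you spell out the scheme-theoretic inclusion in local coordinates (which is really a computation in the completion $\widehat{\Ol}_{\Pro^2,P}$, where the implicit function theorem gives the form $v=u^{q+1}w(u)$), whereas the paper simply asserts $E\subseteq L_{X,P}$ from the contact order; this also follows directly from the chain $eP\subseteq (q+1)P = X\cap L_{X,P}\subseteq L_{X,P}$.
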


\begin{proof}
Since $L_{X,P}$ has order of contact $q+1\ge e$ with $X$ at $P$, we have
$E\subseteq L_{X,P}$.
Since $\deg (E)>\deg (T)$ and $E\subseteq T\cap L_{X,P}$, Bezout theorem implies
$L_{X,P}\subseteq T$.
\end{proof}

The rational intersections of the Hermitian curve with some parabolas in the
affine chart $\{ z \neq 0\}$
of the projective plane $\Pro^2$ have been recently characterized in \cite{MPS}.

\begin{remark} \label{rempar}
 The authors of \cite{MPS} take affine parabolas of the form
$y=ax^2+bx+c$,
with $a,b,c \in \F_{q^2}$ and $a \neq 0$. 
A generic conic in $\Pro^2$ is given by a homogeneous equation of the
form
$$Ax^2+By^2+Cz^2+Dxy+Exz+Fyz=0,$$
with $A,B,C,D,E,F \in \F_{q^2}$ not all-zero. We see that $P_\infty$ is a
rational point of such conic if and only if $B=0$. Moreover, it is easily checked that
the tangent line to $X$ at $P_\infty$ has
equation $z=0$. This line is also the tangent line to the conic at $P_\infty$ if and
only if
$D=0$ and $F \neq 0$. It follows that
the parabolas studied in
\cite{MPS} are exactly the smooth conics in $\Pro^2$ passing through $P_\infty$
and tangent 
to $X$ at $P_\infty$.
\end{remark}

Thanks to the previous Remark \ref{rempar}, we can restate part of
\cite{MPS}, Theorem 3.1, in the following convenient form.

\begin{lemma}\label{paras}
Let $h>0$ be an integer and let  $\mathcal{T}(h)$ denote the set of the smooth
conics $T$
 in $\Pro^2$ passing through $P_\infty$, tangent to the Hermitian curve
$X$ at $P_\infty$, and satisfying  $\sharp(T \cap X \cap \{ z \neq 0\})=h$.
\begin{enumerate}
 \item Assume $q$ odd. Then $|\mathcal{T}(2q)|=q^2(q+1)(q-1)/2$. Moreover, if
$h>2q$ then $\mathcal{T}(h)=\emptyset$.
\item Assume $q$ even. If $h>2q-1$ then $\mathcal{T}(h)=\emptyset$.
\end{enumerate}
\end{lemma}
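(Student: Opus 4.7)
The plan is to use Remark \ref{rempar} to reduce the statement to a polynomial root-counting problem over $\F_{q^2}$, and then invoke Theorem 3.1 of \cite{MPS}.

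First, by Remark \ref{rempar}, every element of $\mathcal{T}(h)$ is the projective closure of an affine parabola $y = ax^2 + bx + c$ with $a \in \F_{q^2}^*$ and $b, c \in \F_{q^2}$, and conversely. Substituting this expression into the affine equation $y^q + y = x^{q+1}$ of $X$ and using that the $q$-th power map is additive on $\F_{q^2}[x]$ produces the polynomial
$$f_{a,b,c}(x) := a^q x^{2q} - x^{q+1} + b^q x^q + a x^2 + b x + (c^q + c) \in \F_{q^2}[x],$$
whose distinct $\F_{q^2}$-rational roots are in bijection with the points of $T \cap X \cap \{z \neq 0\}$. Since $a \neq 0$, the polynomial has degree exactly $2q$, so one obtains the immediate bound $\sharp(T \cap X \cap \{z \neq 0\}) \le 2q$, which already yields $\mathcal{T}(h) = \emptyset$ for every $h > 2q$ in part $(1)$.

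Second, the finer statements, namely the enumeration $|\mathcal{T}(2q)| = q^2(q+1)(q-1)/2$ for $q$ odd and the sharper emptiness $\mathcal{T}(h) = \emptyset$ for $h > 2q-1$ when $q$ is even, are precisely the content of \cite{MPS}, Theorem 3.1, once translated through the identification above. Hence the only additional verification required in my proof is that the parabolas $y = ax^2 + bx + c$ considered in \cite{MPS} coincide, after projective completion, with the smooth conics comprising $\mathcal{T}(h)$; and this coincidence is exactly the content of Remark \ref{rempar}.

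The hard part, which is absorbed by the citation, is the analysis of when $f_{a,b,c}$ splits with the maximum possible number of $\F_{q^2}$-rational roots and the characteristic-dependent enumeration of the admissible triples $(a,b,c)$ realizing this maximum. A self-contained treatment would require a delicate case analysis of the splitting behaviour of the mixed linearized/$x^{q+1}$ polynomial $f_{a,b,c}$, distinguishing odd from even characteristic; here we simply import that analysis from \cite{MPS}.
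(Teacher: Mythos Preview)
Your proposal is correct and follows exactly the paper's approach: the paper does not give an independent proof either, but simply states that Lemma \ref{paras} is a restatement of \cite{MPS}, Theorem 3.1, via the identification of Remark \ref{rempar}. Your additional explicit computation of $f_{a,b,c}$ and the degree bound are a nice touch but not needed beyond what the paper itself provides.
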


The results of this section will be applied throughout the rest of the paper in order to describe
the minimum distance and the minimum-weight codewords of the duals of Hermitian one-point codes.

\section{The dual minimum distance} \label{dualmindist}

The aim of this section is to give a geometric interpretation to the dual
minimum distance of certain codes arising from plane smooth curves. The results
improve the powerful method by A. Couvreur (see \cite{c3}) in the planar case,
and explicitly characterize the supports of minimum-weight codewords in terms
of cohomological vanishing conditions. The particular case of Hermitian
one-point codes is studied in depth.

\begin{proposition}\label{e1}
Let $\F$ be any field and let $\Pro^2$ denote the projective plane on $\F$.
Let $X \subseteq \Pro^2$ be a smooth plane curve. Fix an integer $d>0$, a
zero-dimensional
scheme $E\subseteq X$ and a finite
subset $B\subseteq X$ such that $B\cap E_{red}=\emptyset$\footnote{Here
$E_{red}$ denotes the reduction of the scheme $E$.}. Denote by $C$ the code 
obtained evaluating the vector space $H^0(C,\Ol_X(d)(-E))$ at the points of $B$.
Set $c:= \deg (X)$ and assume $d<c$. The following facts hold.

\begin{enumerate}
 \item The minimum distance
of $C^\perp$ is the minimal cardinality, say $z$, of a subset of $S \subseteq B$
such that $h^1(\mathbb {P}^2,\mathcal {I}_{S\cup E}(d)) >h^1(\mathbb
{P}^2,\mathcal
{I}_E(d))$.

\item A codeword of $C^\perp$
has weight $z$ if and only if it is supported by a subset $S\subseteq B$ such
that\footnote{We denote the cardinality of a finite set, say
$S$, by $\sharp(S)$.}:
\begin{enumerate}
 \item $\sharp{S} = z$,
\item $h^1(\mathbb {P}^2,\mathcal {I}_{E\cup S}(d)) >h^1(\mathbb
{P}^2,\mathcal {I}_E(d))$,
\item $h^1(\mathbb {P}^2,\mathcal {I}_{E\cup S}(d))
>h^1(\mathbb {P}^2,\mathcal {I}_{E\cup S'}(d))$ for any $S'\varsubsetneq S$.
\end{enumerate}
\end{enumerate}
\end{proposition}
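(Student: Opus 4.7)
The plan is to translate dual codewords supported on a subset $S \subseteq B$ into cohomological data on $\mathbb{P}^2$, and then to extract both (1) and (2) by routine linear algebra.

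First one should identify the evaluation source intrinsically on $\mathbb{P}^2$. Applying cohomology to the short exact sequence
$$0 \to \mathcal{O}_{\mathbb{P}^2}(d-c) \to \mathcal{I}_E(d) \to j_{*}\bigl(\mathcal{O}_X(d)(-E)\bigr) \to 0$$
(where $j : X \hookrightarrow \mathbb{P}^2$) and using $d<c$ together with $H^{i}(\mathbb{P}^2,\mathcal{O}(d-c))=0$ for $i\le 1$, one obtains a canonical isomorphism
$$H^0(\mathbb{P}^2,\mathcal{I}_E(d)) \;\cong\; H^0(X,\mathcal{O}_X(d)(-E)),$$
so that $C$ may be realised as the image of the evaluation map $\mathrm{ev}_B : H^0(\mathbb{P}^2,\mathcal{I}_E(d)) \to \F^B$.

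Next, for $S\subseteq B$, set $V_S := \{v \in C^\perp : \mathrm{supp}(v)\subseteq S\}$. By standard duality, $V_S$ is canonically isomorphic to the cokernel of the restricted evaluation $\mathrm{ev}_S : H^0(\mathbb{P}^2,\mathcal{I}_E(d)) \to \F^S$. Using the hypothesis $B\cap E_{\mathrm{red}}=\emptyset$, the short exact sequence
$$0 \to \mathcal{I}_{E\cup S}(d) \to \mathcal{I}_E(d) \to \mathcal{O}_S(d) \to 0$$
is well-defined, and its long cohomology sequence, combined with $H^1(\mathcal{O}_S(d))=0$ (since $S$ is zero-dimensional) and $H^1(\mathbb{P}^2,\mathcal{O}(d))=0$, yields the key dimension formula
$$\dim V_S \;=\; h^1(\mathbb{P}^2,\mathcal{I}_{E\cup S}(d)) - h^1(\mathbb{P}^2,\mathcal{I}_E(d)).$$
Part (1) is immediate: $d(C^\perp)$ is the least $|S|$ for which $V_S\neq 0$, which is precisely condition (b).

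For part (2), the forward direction is routine: given a codeword $v\in C^\perp$ of weight $z$ and $S:=\mathrm{supp}(v)$, one has $|S|=z$ (a), $v \in V_S$ gives (b), and for every $S'\varsubsetneq S$ the vector $v\notin V_{S'}$, whence $V_{S'}\varsubsetneq V_S$ and (c) follows from the dimension formula. Conversely, given $S$ satisfying (a)--(c), the strict filtration $V_{S'}\varsubsetneq V_S$ for every $S'\varsubsetneq S$ produces a vector $v\in V_S$ whose support is exactly $S$, hence a dual codeword of weight $|S|=z$. The main technical point is the careful cohomological bookkeeping on $\mathbb{P}^2$ (rather than on $X$), so that the $h^1$'s appearing in the statement are precisely the ones measuring the failure of $\mathrm{ev}_S$ to be surjective; once this is in place, the rest of the argument is linear algebra.
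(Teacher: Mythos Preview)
Your proposal is correct and follows essentially the same route as the paper: both identify $H^0(X,\mathcal{O}_X(d)(-E))$ with $H^0(\mathbb{P}^2,\mathcal{I}_E(d))$ via $d<c$, and then translate ``$S$ fails to impose independent conditions'' into the jump $h^1(\mathcal{I}_{E\cup S}(d))>h^1(\mathcal{I}_E(d))$; your derivation of the exact formula $\dim V_S = h^1(\mathcal{I}_{E\cup S}(d))-h^1(\mathcal{I}_E(d))$ is in fact a bit cleaner than the paper's phrasing. One small tightening for the converse of (2): over a finite field the mere strictness $V_{S'}\varsubsetneq V_S$ does not by itself exclude $V_S=\bigcup_{P\in S}V_{S\setminus\{P\}}$, but since $|S|=z$ equals the minimum distance you actually have $V_{S'}=0$ for every proper $S'$, so any nonzero $v\in V_S$ already has support exactly $S$ --- this is the content of the paper's matrix-column argument.
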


\begin{proof}
Since $X$ is projectively normal (being a smooth plane curve) and we assumed
$d<c$, the restriction map
$\rho _d: H^0(\mathbb {P}^2,\mathcal {O}_{\mathbb {P}^2}(d)) \to H^0(X,\mathcal
{O}_X(d))$ is bijective.
Hence the restriction map $\rho _{d,E}: H^0(\mathbb {P}^2,\mathcal {I}_E(d)) \to
H^0(X,\mathcal {O}_X(d)(-E))$ is bijective. It follows that a finite subset
$S\subseteq
C \setminus E_{red}$
imposes independent condition to $H^0(X,\mathcal {O}_X(d)(-E))$ if and only
if $S$ imposes independent conditions to $H^0(\mathbb {P}^2,\mathcal {I}_E(d))$.
Moreover,
the set $S$ imposes independent conditions to $H^0(\mathbb {P}^2,\mathcal
{I}_E(d))$
if and only if $h^1(\mathbb {P}^2,\mathcal {I}_{E\cup S}(d)) =h^1(\mathbb
{P}^2,\mathcal {I}_E(d))$ (here we use again that $S\cap E_{red}=\emptyset$).
 To get the existence of a non-zero
codeword of $C^\perp$ whose support is $S$ (and not only with support contained
in $S$) we need that
the submatrix $M_S$ of the generator matrix of $C$ obtained by considering the
coloumns associated to the points appearing in $S$ has
the property
that each of its submatrices obtained deleting one coloumn have the same rank of
$M_S$ (each
such coloumn
is associated to some $P\in S$ and we require that the codeword has support
containing  $P$). This is equivalent to the last claim in the statement.
\end{proof}

\begin{remark}\label{possiamopossiamo}
 From now on, we explicitly focus on the analysis of one-point codes on the Hermitian
curve. As pointed out in Remark \ref{coho}, we need to study codes of type $C(d,a)$
with $d>0$ and $0 \le a \le q$.
In the
next lemma we show that we may restrict to the analysis of $C(d,a)$ codes such
that $d>0$ and $0 \le a \le d$.
\end{remark}

\begin{lemma}
\label{reduction}
 Let $X \subseteq \Pro^2$ be the Hermitian curve. Consider a  $C(d,a)$
code, with $d>1$ and $0 \le a \le q$. Set $E:=aP_\infty$.
\begin{itemize}
 \item If $a>d$ then set $d':=d-1$ and $a':=0$,
\item otherwise set $d':=d$ and $a':=a$.
\end{itemize}
  Then  
$C(d,a)$ and $C(d',a')$ are strongly isometric codes. In particular, their dual
codes are strongly ismoetric. 
\end{lemma}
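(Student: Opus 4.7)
If $a\le d$, the claim is vacuous: $d'=d$ and $a'=a$, so $C(d',a') = C(d,a)$. Assume therefore $a>d$. Combined with the standing hypothesis $a\le q$, this gives $d+1\le a\le q$, and in particular $2\le a\le q+1$.

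The plan is to establish the cohomological identity
\[
H^0(X, \mathcal{O}_X(d)(-aP_\infty)) \;=\; z \cdot H^0(X, \mathcal{O}_X(d-1)),
\]
where $z$ is the linear form defining the tangent line $L_{X,P_\infty}$. Recall from Remark \ref{rempar} that this tangent line has equation $z=0$, and from Lemma \ref{intrette} that it meets $X$ only at $P_\infty$, with contact order $q+1$; equivalently, the divisor cut on $X$ by the linear form $z$ is $(q+1)P_\infty$. The inclusion $\supseteq$ is then immediate: for any degree $d-1$ form $G$, the degree $d$ form $zG$ vanishes on $X$ at $P_\infty$ with order at least $q+1\ge a$. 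For the reverse inclusion I would apply Lemma \ref{intcurve} with $e:=a$. The range condition $2\le a\le q+1$ is verified above, and any $F$ in the left-hand side cuts a plane curve of degree $\deg F = d\le a-1$ containing the scheme $aP_\infty$; the lemma therefore forces $L_{X,P_\infty}=\{z=0\}$ to be a component of $\{F=0\}$, i.e.\ $z\mid F$.

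Once this identity is in hand, the strong isometry is a formality. Working in the affine chart $\{z=1\}$, which contains $B$, evaluation of $zG$ at an affine point $(x,y,1)$ is just the evaluation of $G$ at that point, so $C(d,a)$ and $C(d-1,0)=C(d',a')$ coincide as subspaces of $\F_{q^2}^n$ and are in particular strongly isometric; the corresponding statement for the dual codes follows from Remark \ref{rrrrr}. The only real obstacle is the divisibility $z\mid F$ for every section of $\mathcal{O}_X(d)(-aP_\infty)$: the numerical inequality $a>d$ is tailor-made to fit the hypothesis $\deg T\le e-1$ of Lemma \ref{intcurve}, and beyond that single application everything is bookkeeping within the cohomological framework of Remark \ref{coho}.
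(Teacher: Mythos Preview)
Your proof is correct and follows essentially the same approach as the paper: both argue that when $a>d$ every section of $H^0(X,\mathcal{O}_X(d)(-aP_\infty))$ is divisible by the equation of $L_{X,P_\infty}$, yielding an isomorphism with $H^0(X,\mathcal{O}_X(d-1))$, and then that division by this equation does not affect evaluation on $B$. You are in fact more careful than the paper, explicitly invoking Lemma~\ref{intcurve} to justify the divisibility (the paper simply asserts it) and observing that $L_{X,P_\infty}=\{z=0\}$ so that the two codes literally coincide on the affine chart containing $B$.
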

\begin{proof} First of all, set $E':=a'P_\infty$.
The code $C(d,a)$ is obtained evaluating the degree $d$ homogeneous forms
vanishing
on the scheme $aP_\infty$. If
$a>d$ then an $f \in H^0(X,\Ol_X(d)(-aP_\infty))$ is divided by the equation of
the tangent line to $X$ at $P_\infty$, here denoted by $L_{X,P_\infty}$.
The division by such equation gives an isomorphism of vector spaces
$$H^0(X,\Ol_X(d)(-E)) \cong H^0(X,\Ol_X(d')(-E')).$$
Since $L_{X,P_\infty}$ does not intersect $X$ at any
rational point different
from $P_\infty$ (Lemma \ref{intrette}), we get that
$C(d,a)$ and $C(d',a')$ are in fact strongly isometric codes. Their dual codes
are also strongly isometric (Remark \ref{rrrrr}).
\end{proof}

The following lemma is rather technical and provides some cohomological
properties of zero-dimensional subschemes of $\Pro^2$. The result is taken from
\cite{br}, Lemma 7, and the proof is omitted here. 

\begin{lemma}\label{u00.01}
Let $\F_{q^2}$ be the finite field with $q^2$ elements ($q$ a prime power) and
denote 
by $\Pro^2$ the projective plane over the field $\F_{q^2}$. 
Let $X\subseteq \Pro^2$ be the Hermitian curve. Choose an integer $d>0$ and 
a zero-dimensional scheme $Z \subseteq X(\F_{q^2})$ of degree $z>0$. The
following facts hold.

\begin{itemize}
\item[(a)] If $z\le d+1$, then $h^1(\mathbb {P}^2,\mathcal {I}_Z(d))=0$.

\item[(b)] If $d+2 \le z\le 2d+1$, then $h^1(\mathbb {P}^2,\mathcal {I}_Z(d))>0$
if and only if there
exists a line $T_1$ such that $\deg (T_1\cap Z)\ge d+2$.

\item[(c)] If $2d+2\le z \le 3d-1$ and $d\ge 2$, then $h^1(\mathbb
{P}^2,\mathcal {I}_Z(d))>0$ if and only if
either there
exists a line $T_1$ defined over $\F_{q^2}$ such that $\deg (T_1\cap Z)\ge d+2$,
or there exists a conic $T_2$ defined over $\F_{q^2}$ such that $\deg (T_2\cap
Z) \ge 2d+2$.

\item[(d)] Assume $z=3d$ and $d\ge 3$. Then $h^1(\mathbb {P}^2,\mathcal
{I}_Z(d))>0$ if and only if
either there
exists a line $T_1$defined over $\F_{q^2}$ such that $\deg (T_1\cap Z)\ge d+2$,
or there is a conic $T_2$ defined over $\F_{q^2}$ such that $\deg (T_2\cap Z)
\ge 2d+2$, or there exists a plane cubic
$T_3$ such that $Z$ is the complete intersection of $T_3$ and a plane curve of
degree
$d$. In the latter case, if $d\ge 4$ then $T_3$ is unique and defined over
$\F_{q^2}$ and we may
find a plane curve $C_d$ defined over $\F_{q^2}$ and with $Z = T_3\cap C_d$.

\item[(e)] Assume $z \le 4d-5$ and $d\ge 4$.  Then $h^1(\mathbb {P}^2,\mathcal
{I}_Z(d))>0$ if and only if
either there
exists a line $T_1$ defined over $\F_{q^2}$ such that $\deg (T_1\cap Z)\ge d+2$,
or there exists a conic $T_2$ defined over $\F_{q^2}$ such that $\deg (T_2\cap
Z) \ge 2d+2$, or there exist $W\subseteq Z$ defined over $\F_{q^2}$ with $\deg
(W)=3d$ and plane cubic
$T_3$ defined over $\F_{q^2}$ such that $W$ is  the complete intersection of
$T_3$ and a plane curve of degree
$d$, or there is a plane cubic $C_3$ defined over $\F_{q^2}$ such that $\deg
(C_3\cap Z)\ge 3d+1$.
\end{itemize}
\end{lemma}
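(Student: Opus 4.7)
The plan is to convert each statement into a postulation question about a zero-dimensional subscheme of $\Pro^2$ and then to run the standard Castelnuovo-type residual analysis. From the short exact sequence $0\to \mathcal{I}_Z(d)\to \Ol_{\Pro^2}(d)\to \Ol_Z(d)\to 0$, together with $h^1(\Pro^2,\Ol_{\Pro^2}(d))=0$ for $d\ge 0$, the condition $h^1(\Pro^2,\mathcal{I}_Z(d))>0$ is equivalent to $Z$ failing to impose independent conditions to the linear system $|\Ol_{\Pro^2}(d)|$. The engine of the proof is the residual exact sequence: given a plane curve $Y$ of degree $k\le d$ and writing $\mbox{Res}_Y(Z)$ for the residual scheme, one has
$$0\to \mathcal{I}_{\mbox{Res}_Y(Z)}(d-k)\to \mathcal{I}_Z(d)\to \mathcal{I}_{Z\cap Y,Y}(d)\to 0.$$

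The ``if'' direction of (b)--(e) is essentially a dimension count: once $Z$ contains $d+2$ collinear points (resp.\ $2d+2$ points on a conic, $3d+1$ points on a cubic) the restriction of degree $d$ forms to that line (resp.\ conic, cubic) lands in a target of dimension strictly smaller than the number of constraints, so the evaluation map cannot be surjective and $h^1(\Pro^2,\mathcal{I}_Z(d))>0$. The ``only if'' direction I would prove by induction on $d$ with a case analysis on the multisecant structure of $Z$. Part (a) is the base case: any $z\le d+1$ points are separated by a product of $d$ lines, so they impose independent conditions. For (b), if no line meets $Z$ in $d+2$ points, choose a line $T_1$ cutting $Z$ in as many points as possible and residuate; the intersection $Z\cap T_1$ imposes independent conditions on $\Ol_{T_1}(d)$, while $\deg(\mbox{Res}_{T_1}(Z))\le d$, so the inductive hypothesis in degree $d-1$ gives $h^1(\Pro^2,\mathcal{I}_{\mbox{Res}_{T_1}(Z)}(d-1))=0$, and the long exact sequence forces $h^1(\Pro^2,\mathcal{I}_Z(d))=0$. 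Parts (c)--(e) follow the same template but with more cases: absent any exceptional line, conic, or cubic witnessing the expected jump, a residuation with respect to a well-chosen curve of degree $1$, $2$, or $3$ drops the problem to a previous item of the lemma.

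The main obstacle is twofold. The arithmetic bookkeeping must be sharp enough that, in each case, the residual scheme lands in the range where the previous item of the lemma applies; this is precisely what forces the specific numerical bounds $2d+1$, $3d-1$, $3d$, $4d-5$ appearing in the five statements, together with the hypotheses $d\ge 2,3,4$ in (c),(d),(e). The second difficulty is arithmetic: the exceptional curves $T_1$, $T_2$, $T_3$ must be defined over $\F_{q^2}$. This is handled by Galois descent, observing that the minimal plane curve of a prescribed degree through a Galois-stable subscheme of $Z$ is itself Galois-stable and hence $\F_{q^2}$-rational. The uniqueness of the cubic $T_3$ in (d) when $d\ge 4$ is an immediate consequence of B\'ezout's theorem, since two distinct cubics meet in at most $9<3d$ points and therefore cannot both contain $Z$; the curve $C_d$ in the complete intersection description is then produced by another application of the residual exact sequence, this time with respect to $T_3$.
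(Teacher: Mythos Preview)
The paper does not actually prove this lemma: it states explicitly that ``the result is taken from \cite{br}, Lemma~7, and the proof is omitted here.'' So there is no in-paper argument to compare against. Your outline is the standard Castelnuovo residuation method (exactly the circle of ideas in the references \cite{bgi} and \cite{ep} that the paper invokes elsewhere), and it is the right strategy.

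That said, two points in your sketch deserve tightening. First, your justification of (a) --- ``any $z\le d+1$ points are separated by a product of $d$ lines'' --- treats $Z$ as reduced, whereas the lemma allows $Z$ to be a non-reduced zero-dimensional subscheme of $X$. The correct argument uses that $Z$, being contained in the smooth curve $X$, is curvilinear, and then appeals to the curvilinear version of the statement (this is precisely \cite{bgi}, Lemma~34, or \cite{ep}). Second, in the ``if'' direction of (d) your dimension count covers $d+2$ on a line, $2d+2$ on a conic, and $3d+1$ on a cubic, but not the complete-intersection clause: when $Z$ is a $(3,d)$ complete intersection of degree exactly $3d$, the Koszul resolution (or Cayley--Bacharach) gives $h^1(\mathcal{I}_Z(d))=h^0(\Ol_{\Pro^2}(0))=1$, which is what is needed. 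Finally, your treatment of (c)--(e) as ``the same template but with more cases'' is honest but optimistic; the case analysis in (d) and (e), in particular isolating the complete-intersection subcase and producing the $\F_{q^2}$-rational $C_d$, requires real work beyond a single residuation step.
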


\begin{lemma} \label{h1=0}
 Let $X$ be the Hermitian curve. Choose integers $d>0$ and $0 \le a \le d$. Set
$E:=aP_\infty$. Then $h^1(\Pro^2,\mathcal{I}_E(d))=0$.
\end{lemma}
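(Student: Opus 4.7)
The plan is to show that the length-$a$ scheme $E = aP_\infty$ sits scheme-theoretically inside the tangent line $L := L_{X,P_\infty}$ viewed as a subscheme of $\Pro^2$; once this is granted, the vanishing of $h^1(\Pro^2, \mathcal{I}_E(d))$ reduces to two elementary cohomology computations. For the inclusion, the relevant range of parameters (Remark \ref{coho}) forces $a \le q$, and $L$ has contact of order $q + 1$ with $X$ at $P_\infty$ (Lemma \ref{intrette}); hence the length-$a$ divisor $E$ on $X$ is already cut out inside $L$, exactly as in the opening observation of the proof of Lemma \ref{intcurve}.

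With $E \subseteq L$ in hand, I would use the short exact sequence of ideal sheaves on $\Pro^2$
\[
0 \longrightarrow \mathcal{I}_L(d) \longrightarrow \mathcal{I}_E(d) \longrightarrow \mathcal{I}_{E,L}(d) \longrightarrow 0,
\]
where $\mathcal{I}_{E,L}$ denotes the ideal sheaf of $E$ considered inside $L$. The left-hand term is $\mathcal{O}_{\Pro^2}(d - 1)$, whose first cohomology vanishes for every $d \ge 1$. The right-hand term is a sheaf on $L \cong \Pro^1$; since $E$ cuts out a length-$a$ subscheme on the line, it is isomorphic to $\mathcal{O}_L(d - a)$, and its $h^1$ vanishes whenever $d - a \ge -1$, which is ensured by the hypothesis $a \le d$. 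The long exact sequence in cohomology then pinches $h^1(\Pro^2, \mathcal{I}_E(d))$ between two zeros, forcing the claimed vanishing.

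The step I expect to be the main obstacle is precisely the scheme-theoretic inclusion $E \subseteq L$ inside $\Pro^2$, as opposed to the obvious inclusion inside $X$. This is a local assertion at $P_\infty$: working in the local ring $\mathcal{O}_{\Pro^2, P_\infty}$ with a uniformizer $t$ of $X$ at $P_\infty$, one must verify that the local equation of $L$ lies in the ideal $(f_X, t^{q+1})$, which is exactly the tangency of $L$ to $X$ with contact order $q + 1$. Once this local verification is accepted, the cohomological computation sketched above is purely formal.
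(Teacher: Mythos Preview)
Your argument is correct, but it differs from the paper's. The paper simply invokes Lemma~\ref{u00.01}: since $\deg(E)=a\le d\le d+1$, part~(a) of that lemma gives $h^1(\Pro^2,\mathcal{I}_E(d))=0$ immediately (the paper phrases it as a contradiction argument, but it is essentially a one-line citation of an imported result).

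Your route is more self-contained: rather than appealing to the black-box Lemma~\ref{u00.01}, you exploit the explicit geometry of the tangent line $L_{X,P_\infty}$ to place $E$ inside a $\Pro^1$ and then read off the vanishing from the residual exact sequence. This is exactly the kind of computation that underlies the proof of Lemma~\ref{u00.01}(a) in the curvilinear case, so you are in effect reproving the special instance you need. The gain is that your proof is elementary and independent of \cite{br}; the cost is that it is longer and, as you note, requires the auxiliary bound $a\le q+1$ to guarantee the scheme-theoretic inclusion $E\subseteq L_{X,P_\infty}$. The paper's proof, by contrast, works for arbitrary $a\le d$ with no reference to $q$, since Lemma~\ref{u00.01}(a) applies to any zero-dimensional subscheme of $X$ of degree at most $d+1$. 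In the paper's setting this distinction is moot (Remark~\ref{coho} always gives $a\le q$), so both proofs cover all the cases that matter.

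One small point of precision in your last paragraph: to get $E\subseteq L$ in $\Pro^2$ you want the local equation $\ell$ of $L$ to lie in the ideal $(f_X,\tilde{t}^{\,a})$ defining $E$, not in $(f_X,\tilde{t}^{\,q+1})$. The contact-order hypothesis gives $(f_X,\ell)=(f_X,\tilde{t}^{\,q+1})$, hence $\ell\in(f_X,\tilde{t}^{\,q+1})\subseteq(f_X,\tilde{t}^{\,a})$ once $a\le q+1$; your sketch has the right idea but the target ideal should be stated as $(f_X,\tilde{t}^{\,a})$.
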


\begin{proof}
 Assume $h^1(\Pro^2,\mathcal{I}_E(d))>0$. Since $a \le d$, by Lemma
\ref{u00.01} there exists a line $L \subseteq \Pro^2$ such that $\deg(L\cap E)
\ge d+2$. Since in any case $\deg(L\cap E) \le d$ we immediately get a
contradiction.
\end{proof}

\section{Geometry of minimum-weight codewords}
\label{secminimum}

In this section an explicit description of the minimum-weight codewords of
$C(d,a)^\perp$ codes is provided, for any choice of $d$ with
$1 \le d \le q$. By Remark \ref{possiamopossiamo}, we restrict to the case $0
\le a \le d$. More precisely, we combine Proposition \ref{e1} with the other
preliminary results of Section \ref{secgeo} and Section \ref{dualmindist} in order to
geometrically characterize the supports of the minimum weight-codewords of
$C(d,a)^\perp$ codes. In particular, here we derive some explicit formulas  for
their number.

\begin{theorem} \label{minoreq}
 Let $d \le q-1$ be a positive integer. Take any integer $0 \le a \le d$ and
denote by $\delta:=\delta(d,a)$ the minimum distance of $C(d,a)^\perp$. Let
$A_\delta$ be the number of the minimum-weight codewords of $C(d,a)^\perp$.
\begin{enumerate}
 \item If $a=0$ then $\delta=d+2$ and a subset $S=\{ P_1,...,P_\delta
\} \subseteq X(\F_{q^2})\setminus \{ P_\infty\}$ of cardinality $\delta$ is the support of a
minimum-weight codewords of $C(d,a)^\perp$ if and only if it consists of
$\delta$ collinear points. Moreover,
\begin{equation*}
 \frac{A_\delta}{q^2-1}=
\left\{
\begin{array}{ll}
 \displaystyle (q^4-q^3)& \mbox{ if $d=q-1$}, \\
 \displaystyle  q^2 \binom{q}{\delta}+(q^4-q^3) \binom{q+1}{\delta} 
 & \mbox{ if $d<q-1$.}
\end{array}
\right.\
\end{equation*}

\item If $a>0$ then $\delta=d+1$ and a subset $S=\{ P_1,...,P_\delta
\}  \subseteq X(\F_{q^2})\setminus
\{ P_\infty\}$ of cardinality $\delta$ is the support
of a minimum-weight codeword of $C(d,a)^\perp$ if and only if $P_\infty,
P_1,...,P_\delta$ are
collinear points. Moreover, 
\begin{equation*}
 \frac{A_\delta}{q^2-1}=q^2\binom{q}{\delta}.
\end{equation*}
\end{enumerate}
\end{theorem}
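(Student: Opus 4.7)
The plan is to combine Proposition \ref{e1} with Lemmas \ref{h1=0} and \ref{u00.01}. Lemma \ref{h1=0} gives $h^1(\Pro^2, \mathcal{I}_E(d)) = 0$, so Proposition \ref{e1} reduces the problem to finding the smallest $|S|$ with $S \subseteq X(\F_{q^2}) \setminus \{P_\infty\}$ such that $h^1(\Pro^2, \mathcal{I}_{E \cup S}(d)) > 0$, and to describing all minimal such $S$. Since $d \le q-1$ and $a \le d$, the degree of $E \cup S$ stays within the range covered by Lemma \ref{u00.01}(a)--(b), which turns the question into a purely combinatorial one: find the smallest $|S|$ admitting a line $L \subseteq \Pro^2$ with $\deg(L \cap (E \cup S)) \ge d+2$.

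The main local input is the computation of $\deg(L \cap E)$ at $P_\infty$. Working in affine coordinates at $P_\infty$ and using that the tangent line $L_{X,P_\infty}$ has contact order $q+1$ with $X$, while every other line through $P_\infty$ has contact order $1$, one finds $\deg(L \cap E) = a$ when $L = L_{X,P_\infty}$, $\deg(L \cap E) = 1$ when $L$ is another line through $P_\infty$, and $\deg(L \cap E) = 0$ otherwise. Coupling this with Lemma \ref{intrette} (whereby $L_{X,P_\infty}$ contains no rational point of $X$ besides $P_\infty$, and every secant meets $X(\F_{q^2})$ in exactly $q+1$ points) gives the case analysis. For $a = 0$ one needs $|L \cap S| \ge d+2$, so $\delta = d+2$ and $S$ must consist of $d+2$ collinear points. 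For $a \ge 1$ the tangent line is useless (no rational points of $S$ on it) and lines avoiding $P_\infty$ pick up nothing from $E$, so the only optimum is a secant through $P_\infty$, giving $\delta = d+1$ and forcing $S \cup \{P_\infty\}$ to lie on such a secant. Condition (c) of Proposition \ref{e1}(2) is automatic: deleting any point from a claimed minimum support either drops $|S'|+a$ into the range of Lemma \ref{u00.01}(a) or kills the line criterion of Lemma \ref{u00.01}(b).

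The counting is then standard incidence geometry in $\Pro^2(\F_{q^2})$. Of the $q^2+1$ lines through $P_\infty$, exactly one is $L_{X,P_\infty}$, so $q^2$ are secants through $P_\infty$ carrying $q$ points of $X(\F_{q^2}) \setminus \{P_\infty\}$; among the $q^4$ lines avoiding $P_\infty$, exactly $q^3$ are tangents to $X$ at other rational points (by Bezout no such tangent meets $P_\infty$), leaving $q^4 - q^3$ secants avoiding $P_\infty$ and carrying $q+1$ points of $X(\F_{q^2})$ each. For $a = 0$ this gives $q^2\binom{q}{\delta}$ supports from the first family (vanishing when $\delta = q+1$, i.e.\ $d = q-1$) and $(q^4 - q^3)\binom{q+1}{\delta}$ from the second; for $a \ge 1$ only the first family contributes, yielding $q^2\binom{q}{\delta}$ supports. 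To pass from supports to codewords one checks that the defect $h^1(\Pro^2, \mathcal{I}_{E \cup S}(d))$ equals $1$. By Bezout any $F \in H^0(\Pro^2, \mathcal{I}_{E \cup S}(d))$ is divisible by the line $L$ containing the $d+2$ collinear points of $S$ (or of $S \cup \{P_\infty\}$ if $a \ge 1$); writing $F = L \cdot G$ and using that $L$ vanishes to order $1$ along $X$ at $P_\infty$ when $a \ge 1$, one identifies the residual space with $H^0(\Pro^2, \mathcal{I}_{(a-1)P_\infty}(d-1))$, whose $h^1$ vanishes by Lemma \ref{h1=0}. A short Euler-characteristic computation yields $h^1 = 1$, so each support carries a $1$-dimensional $\F_{q^2}$-space of codewords, that is $q^2 - 1$ nonzero minimum-weight codewords. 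The main technical obstacle is the scheme-theoretic evaluation of $\deg(L \cap E)$ at $P_\infty$; once this is secured, the rest of the argument is Bezout, Euler characteristic, and line-counting.
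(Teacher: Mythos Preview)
Your argument is correct and follows essentially the same route as the paper: both combine Proposition~\ref{e1}, Lemma~\ref{h1=0}, and Lemma~\ref{u00.01} to reduce to the line criterion $\deg(L\cap(E\cup S))\ge d+2$, then use Lemma~\ref{intrette} to rule out the tangent $L_{X,P_\infty}$ and force the collinearity conclusions, and finally count lines of each type. The only noteworthy difference is in passing from supports to codewords: the paper simply invokes the elementary fact that any two minimum-weight codewords of a linear code with the same support are proportional, whereas you compute $h^1(\Pro^2,\mathcal I_{E\cup S}(d))=1$ directly via a residual/Bezout argument; your computation is valid but unnecessary, since the proportionality observation gives the factor $q^2-1$ with no extra work.
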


\begin{proof}
 The minimum distance, $\delta(d,a)$, can be easily computed by
reversing Table \ref{tabsti}. Here we set $E:=aP_\infty$. Since $a\le d$, Lemma
\ref{h1=0} gives $h^1(\Pro^2, \mathcal{I}_E(d))=0$. By Proposition \ref{e1},
a subset 
$S=\{ P_1,...,P_\delta
\} \subseteq X(\F_{q^2})\setminus \{ P_\infty\}$ of cardinality $\delta$ is the support of a
minimum-weight codeword of $C(d,a)^\perp$ if and only if $h^1(\Pro^2,
\mathcal{I}_{E\cup S}(d))>0$. Let $S$ be with this property. 
\begin{enumerate}
\item Assume $a=0$. Then we have $E=\emptyset$ and $\deg(E)
+\sharp(S)=d+2 \le 2d+1$. Hence, by Lemma \ref{u00.01}, $h^1(\Pro^2,
\mathcal{I}_{E \cup S}(d))>0$  if and only if $P_1,...,P_\delta$ are collinear
points. By Lemma \ref{u00.01}, this condition is also sufficient for $S$ to be the support
of a minimim-weight codeword.
\item If $a>0$ then $\deg(E)
+\sharp(S)\le 2d+1$ and so, again by Lemma \ref{u00.01}, $h^1(\Pro^2,
\mathcal{I}_{E\cup S}(d))>0$  if and only if there exists a
line $L \subseteq \Pro^2$ such that $\deg(L \cap (E \cup S)) \ge d+2$. Since
$a \le d$ and Lemma \ref{intrette} holds, $L$ cannot be the tangent line to the
Hermitian curve $X$ at $P_\infty$. As a consequence, $P_\infty$ appears in $L$
with multiplicity one, and so $P_\infty, P_1,...,P_\delta$ are collinear points.
By Lemma \ref{u00.01}, this condition is also sufficient for $S$ to be the support
of a minimim-weight codeword.
\end{enumerate}
To get the formulas for the number of minimum-weight codewords, observe that, in
any linear, code two minimum-weight codewords with the same support are
(non-zero) multiple one each other. This fact follows from the definitions of linear code and 
minimum
distance. Moreover, any non-zero multiple of a minimum-weight codeword is an
other minimum-weight codeword with the same support. Hence we deduce our
formulas by using the properties of lines (Lemma \ref{intrette}).
\end{proof}

Theorem \ref{minoreq} describes the dual code of any one-point code $C_m^\perp$
with $m \le q^2-1$, providing explicit characterizations of the supports of its 
minimum-weight codewords. The following result, on the other hand, studies in
details $C(q,a)^\perp$ codes.

\begin{theorem} \label{te2}
 Set $d:=q$ and choose any integer $0 \le a \le d=q$.  Denote by
$\delta:=\delta(d,a)$ the minimum distance of $C(d,a)^\perp$.

\begin{enumerate}
 \item If $a=0$ then $\delta=2q+2$ and a subset $S=\{ P_1,...,P_\delta\}
\subseteq X(\F_{q^2}) \setminus \{ P_\infty\}$ of cardinality $\delta$ is the support of a
minimum-weight codeword of  $C(d,a)^\perp$ if and only if it is contained into
a conic of $\Pro^2$.

\item If $a=1$ then $\delta=2q+1$ and a subset $S=\{ P_1,...,P_\delta\}
\subseteq X(\F_{q^2}) \setminus \{ P_\infty\}$ of cardinality $\delta$ is the support of a
minimum-weight codeword of  $C(d,a)^\perp$ if and only if $P_\infty,
P_1,...,P_\delta$ lie on
a conic of $\Pro^2$.

\item If $2 \le a < q$ then $\delta=2q$. The following two facts hold.

\begin{enumerate}
\item Assume $q$ even. Then a subset $S=\{ P_1,...,P_\delta\}
\subseteq X(\F_{q^2}) \setminus \{ P_\infty\}$ of cardinality $\delta$  is the support of a
minimum-weight codeword of  $C(d,a)^\perp$ if and only if it is contained into
two lines meeting at $P_\infty$. Moreover,
\begin{equation*}
 A_\delta= (q^2-1) \binom{q^2}{2}.
\end{equation*}

\item Assume $q$ odd. Then a subset $S=\{ P_1,...,P_\delta\}
\subseteq X(\F_{q^2}) \setminus \{ P_\infty\}$ of cardinality $\delta$  is the support of a
minimum-weight codeword of  $C(d,a)^\perp$ if and only if either it is contained
into
two lines meeting at $P_\infty$, or it is contained into a smooth conic of
$\Pro^2$ which is tangent to $X$ at $P_\infty$. Moreover, 
\begin{equation*}
 A_\delta= (q^2-1) \left[q^2(q+1)(q-1)/2 + \binom{q^2}{2} \right].
\end{equation*}
\end{enumerate}
\end{enumerate}
\end{theorem}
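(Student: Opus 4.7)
The plan is to follow the same framework as the proof of Theorem \ref{minoreq}: set $E := aP_\infty$, use Lemma \ref{h1=0} to ensure $h^1(\mathbb{P}^2, \mathcal{I}_E(q)) = 0$, and apply Proposition \ref{e1}, which reduces the problem to finding the minimum cardinality of $S \subseteq X(\F_{q^2}) \setminus \{P_\infty\}$ with $h^1(\mathbb{P}^2, \mathcal{I}_{E \cup S}(q)) > 0$. The key difference from Theorem \ref{minoreq} is that with $d = q$, the line obstruction of Lemma \ref{u00.01}(b) is essentially unavailable: by Lemma \ref{intrette} any line meets $X$ in at most $q+1$ rational points, so for any configuration of interest $\deg(L \cap Z) \le q+1$ unless $L$ is tangent at $P_\infty$, in which case $L \cap S = \emptyset$ and the contribution from $E$ is at most $a \le q$. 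Hence the minimum distance is driven by the conic obstruction of Lemma \ref{u00.01}(c).

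For the cases $a = 0, 1$, I would first verify that for $|S|$ strictly less than $\delta$ the ranges of Lemma \ref{u00.01}(a)--(b) apply and force $h^1 = 0$ by the line argument above. At $|S| = \delta$ the scheme $Z = E \cup S$ has degree $2q+2$, putting us in the range of Lemma \ref{u00.01}(c); the line obstruction is still ruled out, and the conic condition gives exactly the stated characterizations (a conic containing $S$ for $a=0$, and a conic through $P_\infty$ and $S$ for $a=1$). The same lemma guarantees that these configurations do produce $h^1 > 0$, establishing minimality and the sufficient direction.

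For case (3), with $2 \le a < q$ and $|S| = 2q$, we have $z = a + 2q \in [2q+2, 3q-1]$, so Lemma \ref{u00.01}(c) applies. The heart of the argument is a classification of conics $T_2$ achieving $\deg(T_2 \cap Z) \ge 2q+2$. Writing $\deg(T_2 \cap Z) = \deg(T_2 \cap E) + |T_2 \cap S|$ and observing that $\deg(T_2 \cap E) = \min(a, i_{P_\infty}(T_2, X))$ because $E$ is the length-$a$ curvilinear subscheme of $X$ at $P_\infty$, I would rule out each non-admissible type in turn (smooth conics disjoint from $P_\infty$ or meeting $X$ transversally there; double lines; the union of the tangent line with another line) via the bounds $|L \cap S| \le q$ for non-tangent lines through $P_\infty$ and $|L \cap S| \le q+1$ for lines avoiding $P_\infty$. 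What survives is exactly two families: the union of two distinct non-tangent lines through $P_\infty$ (giving $i_{P_\infty}(T_2, X) = 2$ and harvesting all $2q$ non-$P_\infty$ rational points on the two lines), and, for $q$ odd, smooth conics tangent to $X$ at $P_\infty$. For the latter, a local calculation at $P_\infty$ in the affine chart $Y = 1$ using the expansion $Z = X^{q+1} + \cdots$ for $X$ shows that any smooth tangent conic has $i_{P_\infty}(T_2, X) = 2$ exactly, because forcing the quadratic coefficient of the conic to vanish would make it factor through the tangent line and become reducible. The symmetric case analysis for $|S| < 2q$ (where the same enumeration shows every conic satisfies $\deg(T_2 \cap Z) < 2q+2$) confirms that $\delta = 2q$.

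Counting is then straightforward. There are $q^2$ non-tangent lines through $P_\infty$, yielding $\binom{q^2}{2}$ unordered pairs. For $q$ odd, Lemma \ref{paras}(1) contributes $q^2(q+1)(q-1)/2$ smooth tangent conics; for $q$ even, Lemma \ref{paras}(2) kills the smooth contribution. Multiplying the number of supports by the $q^2 - 1$ nonzero scalars acting on any fixed support (as in the proof of Theorem \ref{minoreq}) produces the stated formulas. The main obstacle I anticipate is the scheme-theoretic bookkeeping at $P_\infty$ — in particular, the local argument that smooth tangent conics cannot achieve contact of order $\ge 3$ with the Hermitian curve at $P_\infty$, and the patient enumeration needed to eliminate every other conic type without gaps.
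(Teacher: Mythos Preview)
Your proposal is correct and follows essentially the same approach as the paper: reduce via Proposition \ref{e1} and Lemma \ref{h1=0} to the cohomological vanishing condition, rule out the line obstruction of Lemma \ref{u00.01} using Lemma \ref{intrette}, and analyze the conic obstruction case by case. The one noteworthy difference is in case (3): where you propose a local computation in the chart $y=1$ to show that a smooth conic tangent to $X$ at $P_\infty$ has contact order exactly $2$, the paper instead invokes Lemma \ref{intcurve} (a Bezout argument: if the conic contained the length-$3$ subscheme $3P_\infty\subseteq X$, the tangent line $L_{X,P_\infty}$ would be a component), which is slightly cleaner and already available. Your direct verification of $\delta$ is a mild self-contained improvement over the paper's reliance on Table \ref{tabsti}.
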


\begin{proof}
 The dual minimum distance, $\delta=\delta(d,a)$, can be easily computed 
by reversing Table \ref{tabsti} at the beginning of the paper:
$$\delta(d,a)= \left\{ \begin{array}{ll} 2d+2-a & \mbox{ if $a \in \{ 0,1\}$,}
\\ 2d & \mbox{ if $a \ge 2$}.  \end{array}\right.\ $$
Set $E:=aP_\infty$. 
By Proposition \ref{e1} and Lemma \ref{h1=0}, a subset $S=\{
P_1,...,P_\delta\}
\subseteq X(\F_{q^2}) \setminus \{ P_\infty\}$ of cardinality $\delta$  is the support of a
minimum-weight codeword of $C(d,a)^\perp$ if and only if $h^1(\Pro^2,
\mathcal{I}_{E \cup S}(d))>0$. Let $S$ be with this property.

\begin{enumerate}
 \item If $a=0$, then $\delta=2d+2$ and
$\deg(E)+\sharp(S)=2d+2$. By Lemma \ref{u00.01}, there
exists either a
subscheme $W \subseteq S$ of degree $d+2=q+2$ and contained in a line, or a
subscheme $W \subseteq S$ of degree $2d+2=2q+2$ and contained in a conic. The
former case must be excluded because of Lemma \ref{intrette}. In the latter case
we see that $P_1,...,P_{2q+2}$ lie on a conic. By Lemma \ref{u00.01}, this
condition is necessary and sufficient for $S$ to the the support of a
minimum-weight codeword.

\item If $a=1$ then $\delta=2d+1$ and
$\deg(E)+\sharp(S)=2d+2$. By Lemma \ref{u00.01}, there exists either a
subscheme $W \subseteq P_\infty \cup S$ of degree $d+2=q+2$ and contained in a
line, or a subscheme $W \subseteq P_\infty \cup S$ of degree $2d+2=2q+2$ and
contained in a conic. The former case must be excluded because of Lemma
\ref{intrette}. In the latter case we have that $P_\infty, P_1,...,P_{2q+2}$ lie
on a conic. By Lemma \ref{u00.01}, this
condition is necessary and sufficient for $S$ to the the support of a
minimum-weight codeword.

\item Assume $a \ge 2$. We have $\delta=2d=2q$ and 
$\deg(E)+\sharp(S)=a+2d \le 3d-1$ (because we 
assumed $a<q=d$). Hence Lemma \ref{u00.01} applies: either there exists
a
subscheme $W \subseteq aP_\infty \cup S$ of degree $d+2=q+2$ and contained in a
line, or there exists a subscheme $W \subseteq aP_\infty \cup S$ of degree
$2d+2=2q+2$ and contained in a conic. The former case must be excluded, as in
the previous cases. If $W \subseteq aP_\infty \cup \{ P_1,...,P_{2d}\}$,
$\deg(W)=2d+2$ and $W$ is contained in a conic $T$ then the multiplicity of
$P_\infty$ in $W$, say $e_W(P_\infty)$, must be at least 2. On the other hand,
if $e_W(P_\infty)>2$ then (Lemma \ref{intcurve}) the tangent line to $X$ at
$P_\infty$, $L_{X,P_\infty}$, turns out to be a component of $T$. In this case
Lemma \ref{intrette} implies that $P_1,...,P_{2q}$ lie on the line
$T-L_{X,P_\infty}$, which contradicts Lemma \ref{intrette} again. As a
consequence, $e_W(P_\infty)=2$, and we are done. Indeed, $L_{X,P_\infty}$ cannot
be a component of $T$ (use Lemma \ref{intrette} twice) and so $T$ is either the
union of two lines meeting at 
$P_\infty$, or a smooth conic which is tangent to $X$ at $P_\infty$.
By Lemma \ref{u00.01}, this
condition is also sufficient for $S$ to the the support of a
minimum-weight codeword.

\begin{enumerate}
\item Assume $q$ even. By Lemma \ref{paras}, the case of the smooth conic must
be excluded. Hence $S$ is the support of a minimum-weight codeword of
$C(d,a)^\perp$ if and only if it is contained in the union of two plane lines
meeting at $P_\infty$. Since (Lemma \ref{intrette}) any line $L \subseteq
\Pro^2$ satisfies $\deg(L \cap X)=q+1$, and $\delta=2q$, the supports of the
minimum-weight codewords of
$C(d,a)^\perp$ are in bijection with the pairs of distinct lines of $\Pro^2$
passing through $P_\infty$ and not tangent to $X$ at $P_\infty$ (use Lemma
\ref{intrette} again).
The lines through $P_\infty$ are $q^2+1$. One of them is the tangent line to
$X$ at $P_\infty$. The formula follows.

\item If $q$ is odd, then we have to consider also the case of conics. By
Lemma \ref{paras}, a smooth conic $T \subseteq \Pro^2$ which is tangent to $X$
at $P_\infty$ cannot intersect $X$ in more than $2q$ affine points. Hence $S$
must appear exactly as the affine intersection of $X$ and such a conic. By Lemma
\ref{paras}, there exists $q^2(q+1)(q-1)/2$ conics with this property. Since
$2q>3$ and a parabola of the form $y=ax^2+bx+c$ (with $a,b,c \in \F_{q^2}$) is
completely determined by three of its points, distinct conics correspond
to distinct intersections. \qedhere
\end{enumerate}
\end{enumerate}
\end{proof}

\begin{remark}
 The formulas given in Theorem \ref{minoreq} and \ref{te2} extend those of
\cite{MPS}, proved for Hermitian one-point codes of minimum distance smaller or
equal than $q$.
\end{remark}

Theorem \ref{te2} concludes our analysis of the minimum-weight codewords of the
duals of Hermitian one-point codes. Let us examine two explicit
examples.

\begin{example}
 Take $q:=7$. The Hermitian curve is defined over $\F_{49}$ by the
affine equation
$y^7+y=x^8$. Here we study the dual of the one-point code on $X$ obtained evaluating the 
Riemann-Roch
space $L(53P_\infty)$ on the set $X(\F_{49})\setminus \{ P_\infty\}$. We notice that
such code require a significative computational effort, if studied by using a computer.
Write $53=7\cdot 8 -3$, so that $C_{53}$ is strongly isometric to the code $C(7,3)$.
Indeed, in the notation of Remark \ref{coho}, we have $d=q=7$ and $a=3$, with $0 \le a \le d$. 
Hence Theorem \ref{te2} applies. Since $3 \ge 2$, the minimum distance of
$C_{53}^\perp$ is $2q=14$. Moreover, the number of its minimum-weight codewords
is $66382848$. Their supports appear as the affine intersections of $X$ with plane conics
tangent to $X$ at $P_\infty$. 
\end{example}

\begin{example}
 Take $q:=9$. The Hermitian curve is defined over $\F_{81}$ by the
affine equation
$y^9+y=x^{10}$. Here we study the dual of the one-point code on $X$ obtained evaluating the 
Riemann-Roch
space $L(80P_\infty)$ on the set $X(\F_{81})\setminus \{ P_\infty\}$. As in the previous example,
write $80=8\cdot 10$, so that $C_{80}$ is strongly isometric to the code $C(8,0)$.
More precisely, in the notation of Remark \ref{coho}, we have $d=8=q-1$ and $a=0$. 
Hence Theorem \ref{minoreq} applies. Since $a=0$, the minimum distance of
$C_{80}^\perp$ is $d+2=q+1=11$. Moreover, the number of its minimum-weight codewords
is $466560$. Their supports appear as collinear points. 
\end{example}

\section{Improving Hermitian one-point codes}\label{impro}
In the previous sections we dealt with the duals of classical Hermitian
one-point codes $C_m$ obtained evaluating a Riemann-Roch space $L(mP_\infty)$ on
the set of points $B=X(\F_q^2)\setminus \{ P_\infty \}$, $X$ being the Hermitian
curve and $P_\infty$ its point at infinity. Now we discuss the possibility to
modify
the evaluation set $B$ in order to obtain codes which are shorter, but whose
minimum distance turns out to be improved. The geometric
interpretation of the dual minimum distance provided in Section \ref{e1}
offers a precise control of such improvements.
Other interesting improving constructions can be found in \cite{FR}. See also
the recent paper \cite{DK} by I. Duursma and R. Kirov for deep discussions on
the topic.

\begin{definition}\label{impr}
Fix any prime power $q$ and a positive integer $n$. Pick out a non-empty subset 
$H \subseteq \{ 1,...,n\}$ and denote by $\pi_H:\F_q^n \to \F_q^{n-\sharp(H)}$
the projection on the coordinates not-appearing in $H$. In other words, given a
vector
$v=(v_1,...,v_n) \in \F_q^n$, we delete the components associated to any index
$i \in H$ by
operating $\pi_H(v)$.
 Let $C \subseteq \F_q^n$ be a code and let $\delta$ be the minimum distance of
$C^\perp$. A subset
 $H\subseteq \{1,...,n\}$
is said to be an \textbf{improving subset} for $C^\perp$ if the minimum distance
of $\pi_H(C)^\perp$ is
strictly greater than $\delta$. The map $\pi_H$ will be called an
\textbf{improving projection} for the code $C^\perp$.
\end{definition}

\begin{definition}\label{conH}
 Let $d>0$ and $0 \le a \le q$ be integers. Set $B:=X(\F_{q^2})\setminus \{
P_\infty \}$ and choose a non-empty subset $H \subseteq B$.
We denote by $C(d,a,H)$ the code obtained evaluating the vector space
$H^0(X,\Ol_X(d)(-aP_\infty))$ on the set
$B \setminus H$ and by $C(d,a,H)^\perp$ its dual code.
\end{definition}

\begin{remark}
 By enumerating the points appearing in $B=X(\F_{q^2})\setminus \{ P_\infty \}$
we can identify $H$ with a subset of $\{ 1,...,n:=q^3\}$ and write
$C(d,a,H)=\pi_{H}(C(d,a))$ in the notations of Definition \ref{impr}.
\end{remark}

\begin{remark}
 The proof of Lemma \ref{reduction} still works if we replace $B$ and $C(d,a)$
with $B\setminus H$ and $C(d,a,H)$ (respectively). Hence, from now on, we will
consider only $C(d,a,H)$ codes with $d>0$ and $a \le d$.
\end{remark}

\begin{theorem}\label{carimp}
 Let $0<d<q$ and $1 \le a \le d$ be integers. Choose a non-empty subset $H
\subseteq B=X(\F_{q^2})\setminus \{P_\infty\}$
and let $C(d,a,H)$ be as in Definition \ref{conH}. The minimum distance of
$C(d,a,H)^\perp$ is at least $d+1$ and the equality holds if and only if there
exist $d+1$ collinear points in $B \setminus H$ on a line through $P_\infty$.
\end{theorem}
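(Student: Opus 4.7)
The plan is to use Proposition \ref{e1} to translate the minimum distance of $C(d,a,H)^\perp$ into a cohomological minimax statement, then apply the classification of vanishing obstructions given by Lemma \ref{u00.01} together with the intersection-theoretic facts from Section \ref{secgeo}. Set $E := aP_\infty$. Since $a \le d$, Lemma \ref{h1=0} gives $h^1(\Pro^2,\mathcal{I}_E(d))=0$, so by Proposition \ref{e1} the minimum distance of $C(d,a,H)^\perp$ is the least $z$ for which there exists $S \subseteq B \setminus H$ with $\sharp(S) = z$ and $h^1(\Pro^2,\mathcal{I}_{E \cup S}(d)) > 0$.

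For the lower bound $\delta \ge d+1$, I would take an arbitrary $S \subseteq B \setminus H$ with $z := \sharp(S) \le d$, so that $\deg(E \cup S) = a + z \le 2d \le 2d+1$. In the range $a+z \le d+1$, part (a) of Lemma \ref{u00.01} immediately gives $h^1 = 0$. In the remaining range $d+2 \le a+z \le 2d+1$, part (b) forces any obstruction to come from a line $T_1 \subseteq \Pro^2$ with $\deg(T_1 \cap (E \cup S)) \ge d+2$. I would rule this out by a three-case analysis. If $T_1$ is the tangent line $L_{X,P_\infty}$, then $T_1 \cap X$ is supported only at $P_\infty$ by Lemma \ref{intrette}, so $T_1 \cap (E \cup S)$ has length at most $a \le d$. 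If $T_1$ passes through $P_\infty$ but is not tangent, then $T_1$ contributes length $1$ from $E$ and meets $X$ transversally at $q$ other $\F_{q^2}$-rational points (Lemma \ref{intrette}), giving $\deg(T_1 \cap (E \cup S)) \le 1 + \sharp(S) \le d+1$. If $T_1$ avoids $P_\infty$, then $T_1 \cap E = \emptyset$ and $\deg(T_1 \cap (E \cup S)) \le \sharp(S) \le d$. None of these reaches $d+2$, so $h^1 = 0$ and $\delta \ge d+1$.

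For the equality, the same reduction shows $\delta = d+1$ if and only if some $S \subseteq B \setminus H$ with $\sharp(S) = d+1$ satisfies $h^1(\Pro^2,\mathcal{I}_{E \cup S}(d)) > 0$. Here $d+2 \le a + (d+1) \le 2d+1$ (using $1 \le a \le d$), so Lemma \ref{u00.01}(b) applies and the obstruction must again come from a line $T_1$ with $\deg(T_1 \cap (E \cup S)) \ge d+2$. Repeating the three-case analysis with the sharper bound $\sharp(S) = d+1$ shows that $T_1$ must pass through $P_\infty$ but not be the tangent line, and the inequality forces $\sharp(T_1 \cap S) = d+1$, i.e.\ all of $S$ is collinear with $P_\infty$. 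Conversely, any $d+1$ points of $B \setminus H$ lying on a non-tangent line $T_1$ through $P_\infty$ yield $\deg(T_1 \cap (E \cup S)) \ge (d+1) + 1 = d+2$, hence $h^1 > 0$ by Lemma \ref{u00.01}(b).

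The only real obstacle is the case analysis for the line $T_1$: one has to be careful that the non-reduced structure of $E = aP_\infty$, viewed as a length-$a$ subscheme of $\Pro^2$ at $P_\infty$, contributes length exactly $a$ when cut by $L_{X,P_\infty}$ and length $1$ when cut by any other line through $P_\infty$. Once this bookkeeping is in place, the hypotheses $a \le d$ and $d < q$ align exactly so that the tangent line and the ``non-$P_\infty$'' lines cannot produce an obstruction of degree $d+2$, pinpointing the unique geometric configuration that realizes the bound $d+1$.
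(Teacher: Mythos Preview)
Your proposal is correct and follows essentially the same route as the paper: both reduce to Proposition \ref{e1} and Lemma \ref{h1=0}, then invoke Lemma \ref{u00.01} to force an obstructing line and analyze how that line can meet $E\cup S$. The paper phrases the case analysis slightly differently---it argues that if $\sharp(S)\le d$ then $P_\infty$ must contribute multiplicity $\ge 2$ to $L\cap(E\cup S)$, then applies Lemma \ref{intcurve} to conclude $L=L_{X,P_\infty}$ and derive the contradiction---whereas you split directly into the three line types (tangent, non-tangent through $P_\infty$, avoiding $P_\infty$) and compute the contribution of $E$ in each; the content is the same.
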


\begin{proof}
Since $1 \le a \le d$ we get, by setting $E:=aP_\infty$,
$h^1(\Pro^2,\mathcal{I}_E(d))=0$ (Lemma \ref{h1=0}). By Proposition \ref{e1} the
minimum distance of $C(d,a,H)^\perp$ is the smallest cardinality, say $\delta$,
of a subset $S \subseteq B \setminus H$ such that $h^1(\Pro^2,\mathcal{I}_{E\cup
S}(d))>0$.
Let $S:=\{ P_1,...,P_\delta\}$ be the support of a minimum-weight codeword of
$C(d,a,H)^\perp$. In particular, we have $h^1(\Pro^2,\mathcal{I}_{E \cup S}(d))>0$.
If $\delta \le d$ then $\deg(E \cup S) \le 2d$ and (Lemma \ref{u00.01})  there
exists a line $L \subseteq \Pro^2$ such that $\deg(L \cap (E \cup S)) \ge d+2$.
Since $E \cap S = \emptyset$ and $\deg(S) \le d$ we have that $P_\infty$ appears
in $L$ with multiplicity at least two. By Lemma \ref{intcurve} this means that
$L$ is the tangent line to $X$ at $P_\infty$ and (Lemma \ref{intrette}) $\deg(L
\cap (E \cup S))= \deg(E) \le d$, a contradiction. It follows $\delta \ge d+1$.
If $\delta=d+1$ then $\deg(E \cup S) \le 2d+1$ and so (Lemma \ref{u00.01}) there
exists a line $L \subseteq \Pro^2$ such that
$\deg(L \cap (E \cup S)) \ge d+2$. If $P_\infty$ appears in $L$ with
multiplicity greater than one then $L$ is tangent to $X$ at $P_\infty$,
contradicting Lemma \ref{intrette} ($\deg(E) \le d$ here). It follows that the
points $P_1,...,P_\delta$ lie on a line through $P_\infty$. Finally, assume that
$S=\{ P_1,...,P_\delta \}\subseteq B \setminus H$ is a set of $d+1$ collinear
points on a line through $P_\infty$ (this is possible because we assumed $d<q$,
and so $d+1<q+1$). By Lemma \ref{u00.01} we have
$h^1(\Pro^2,\mathcal{I}_{E \cup S}(d))>0$ and hence $S$ contains the support of
a minimum-weight codewords of $C(d,a,H)^\perp$. Since we proved that the minimum
distance of
$C(d,a,H)^\perp$ is at least $d+1$ we deduce that $S$ is in fact the support of
a minimum-weight codeword of $C(d,a,H)^\perp$. This concludes the proof.
\end{proof}

\begin{corollary}\label{CCC}
 Choose integers $0<d<q$ and $1 \le a \le d$. A non-empty subset $H \subseteq
B=X(\F_{q^2})\setminus \{ P_\infty \}$ is an improving subset for $C(d,a)^\perp$
if and only if there are no $d+1$ collinear points in $B \setminus H$ lying on a
line through $P_\infty$. In particular, if $H$ is an improving subset for
$C(d,a)^\perp$ then $\sharp(H) \ge q^2(q-d)$ and so the length of
$C(d,a,H)^\perp$ is at most $q^2d$. 
\end{corollary}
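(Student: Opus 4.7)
The plan is to deduce this corollary essentially as a bookkeeping exercise from Theorem~\ref{carimp} combined with Theorem~\ref{minoreq}, followed by a counting argument on lines through $P_\infty$.

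First, I would pin down the minimum distance of the unmodified code $C(d,a)^\perp$. Since $1\le a\le d$ and $d<q$, Theorem~\ref{minoreq}(2) applies and gives $\delta(d,a)=d+1$. By Definition~\ref{impr}, $H$ is an improving subset for $C(d,a)^\perp$ exactly when the minimum distance of $\pi_H(C(d,a))^\perp=C(d,a,H)^\perp$ is strictly larger than $d+1$. Theorem~\ref{carimp} already tells us that this minimum distance is always $\ge d+1$, with equality if and only if $B\setminus H$ contains $d+1$ collinear points lying on a line through $P_\infty$. Negating this equivalence gives the first assertion of the corollary.

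Next, I would obtain the lower bound on $\sharp(H)$ by counting incidences with the lines through $P_\infty$. There are $q^2+1$ such lines in $\Pro^2$; by Lemma~\ref{intrette} one of them is the tangent line $L_{X,P_\infty}$, which meets $B=X(\F_{q^2})\setminus\{P_\infty\}$ in no rational point at all, while each of the remaining $q^2$ lines meets $X$ in $q+1$ distinct $\F_{q^2}$-rational points, hence meets $B$ in exactly $q$ points. If $H$ is an improving subset, the first part of the corollary forces $\sharp(L\cap(B\setminus H))\le d$, and therefore $\sharp(L\cap H)\ge q-d$, for every non-tangent line $L$ through $P_\infty$. Since each point of $B$ lies on exactly one line through $P_\infty$, the $q^2$ sets $L\cap B$ (as $L$ ranges over the non-tangent lines through $P_\infty$) partition $B$, so
\[
\sharp(H)\;=\;\sum_{L}\sharp(L\cap H)\;\ge\;q^2(q-d).
\]
Consequently the length of $C(d,a,H)^\perp$ is $\sharp(B\setminus H)\le q^3-q^2(q-d)=q^2 d$.

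There is no real obstacle here; the only point that requires a bit of care is the remark that one of the $q^2+1$ lines through $P_\infty$ (the tangent $L_{X,P_\infty}$) contributes nothing to $B$, so only $q^2$ lines participate in the partition of $B$ used in the counting. Everything else is an immediate combination of Theorem~\ref{carimp} with the geometry of lines through $P_\infty$ recorded in Lemma~\ref{intrette}.
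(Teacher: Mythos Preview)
Your proof is correct and follows essentially the same approach as the paper: both use Theorem~\ref{carimp} to obtain the characterization of improving subsets and then count the non-tangent lines through $P_\infty$ (using Lemma~\ref{intrette}) to derive the bound $\sharp(H)\ge q^2(q-d)$. Your version is somewhat more explicit in spelling out, via Theorem~\ref{minoreq}(2) and Definition~\ref{impr}, why ``improving'' amounts to pushing the minimum distance strictly above $d+1$, and in noting that the $q^2$ non-tangent lines through $P_\infty$ partition $B$; the paper's proof leaves these points implicit but uses the same ingredients.
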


\begin{proof}
 By Theorem \ref{carimp}, to get an improving subset $H \subseteq B$ we must
remove from $B$ any $d+1$ points lying on a line through $P_\infty$. The lines
in $\Pro^2$ passing through $P_\infty$ and not tangent to $X$ are $q^2$. Every
such a line contains $q$ points different from $P_\infty$ (Lemma
\ref{intrette}). From any line we must remove at least $q-d$ points. This gives
the formula.
\end{proof}

\begin{remark}
 Corollary \ref{CCC} shows that, in order to improve the minimum distance of a
non-trivial Hermitian one-point code $C_m^\perp$ (with $m \le q^2-1$) by
evaluating the Riemann-Roch space $L(mP_\infty)$ on a proper subset of rational
points of the curve, it is necessary to reduce the length of the code in a
significative manner.
\end{remark}

\begin{example}
 Take $q:=5$. The Hermitian curve $X$ is defined over $\F_{25}$ by the affine equation 
$y^5+y=x^6$. Let us consider the Hermitian one-point code $C_{11}$. Its length is
$q^3=125$. In the notation
of Remark \ref{coho}, we have $d=2<q$ and $a=1< d$. Hence the dual minimum distance of 
$C_{4}$ is $3$ (see Theorem \ref{minoreq}). As in the statement of 
Corollary \ref{CCC}, take $H$ to ba a minimal improving subset for $C_{25}^\perp$.
The length of the improved code is $50$, while it was $125$.
\end{example}

\section{Geometry of small-weight codewords}
\label{secsmall}
 In this Section we state a result which describes the small-weight codewords of
certain
$C(d,a)^\perp$ codes. Our goal is to characterize the
supports of such
codewords from a geometric point of view.

\begin{remark} \label{abasso}
 By Lemma \ref{reduction}, for any $C(d,a)$ code with $d>1$ and $0 \le a \le q$
there exist integers $d'>0$ and $0 \le a' \le d'$ such that $C(d,a)=C(d',a')$.
Hence, from now on, we will consider only $C(d,a)$ codes with $d>0$ and $a \le
d$.
\end{remark}

The proof of Theorem \ref{finale} is
rather technical, and it needs some non-trivial algebraic-geometric
preliminaries. For these reasons, it is given in Appendix \ref{appx}. Notice
that the statement of the theorem can be perfectly understood without any
knowledge of Algebraic Geometry.

\begin{notation}
 We denote by $L_{X,P_\infty}$ the tangent line to the Hermitian curve $X$
at $P_\infty$. Moreover, $\mathcal{R}(\infty)$ will be the set of the lines
passing
through $P_\infty$ which are not tangent to $X$ in any point. $\mathcal{R}$ will
denote the set of the lines which do not contain $P_\infty$ and which are not
tangent to $X$ at any point.
\end{notation}

The following result provides a complete description of the small-weight
codewords of any $C(d,a)^\perp$ such that $d \le q-1$ and $0 \le a \le d$. By
Remark \ref{abasso},
here we describe the small-weight codewords of any non-trivial $C_m^\perp$ code
such that $m \le q^2-1$.

\begin{theorem}\label{finale}
 Let $0 < d \le q-1$ and $0 \le a \le d$ be integers. Denote by $S=\{
P_1,...,P_w\}$ be the support of a codeword of $C(d,a)^\perp$ of weight $w$.

\begin{enumerate}
 \item Assume $d+2 \le a+w \le 2d+1$. Then $S$ must be one of the
sets in the following list:
\begin{enumerate}
\item[(a)] a subset of $w$ elements of $L \cap B$, for an $L \in
\mathcal{R}(\infty)$ ($w \ge d+1$);
\item[(b)] a subset of $w$ elements of $L \cap B$, for an $L \in \mathcal{R}$
($w \ge d+2$).
\end{enumerate}
Moreover, any such a set appears as the support of a codeword of $C(d,a)^\perp$
of weight exactly $w$.

\item Assume  $2d+2 \le a+w \le 3d-1$. Then either $S$ is one of
the sets in cases (a), (b) of the previous list, 
\begin{enumerate}
 \item[(c)] or there exist two distinct lines $L,M \subseteq \Pro^2$ such that

\begin{itemize}
\item $\deg(L \cap (E \cup S)) \ge d+2$,
\item $\deg(M \cap (E \cup S)) \ge d+1$,
\item $\deg((L \cup M) \cap E)+w \ge 2d+2$,
\item either $w \ge 2d+3$ (if $L,M \in \mathcal{R}$), or $w \ge 2d+2$ (if 
$(L,M) \in \mathcal{R}\times \mathcal{R}(\infty)$ or $(M,L) \in
\mathcal{R}\times \mathcal{R}(\infty) $), or $w \ge 2d+1$ (if $L,M \in
\mathcal{R}(\infty)$),
\end{itemize}

\item[(d)] or there exists two distinct lines $L,M \subseteq \Pro^2$ such that
\begin{itemize}
\item $\deg(L \cap (E \cup S)) = \deg(M \cap (E \cup S)) = d+1$,
\item $\deg((L \cup M) \cap E)+w \ge 2d+2$,
\item $L \cap M \cap S = \emptyset$,
\item either $w=2d$ (if and only if $a \ge 2$ and $L \cap M=P_\infty$), or
$w=2d+1$
(if and only if $a \ge 1$ and $(L,M) \in \mathcal{R}\times \mathcal{R}(\infty)$,
or
$(M,L) \in \mathcal{R}\times \mathcal{R}(\infty) $), or $w=2d+2$ (if and only if
$L,M \in \mathcal{R}$),
\end{itemize}
\item[(e)] or there exists a smooth conic $T \subseteq \Pro^2$ such that
\begin{itemize}
\item $\deg(T \cap E)+w \ge 2d+2$,
\item $w \ge 2d+2- \min\{ 2,a\}$.
\end{itemize}
\end{enumerate}
\end{enumerate}
\end{theorem}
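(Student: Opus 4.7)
The plan is to combine Proposition~\ref{e1} with the cohomological dictionary provided by Lemma~\ref{u00.01}, in the same spirit as the proofs of Theorems~\ref{minoreq} and~\ref{te2}. Since $0 \le a \le d$, Lemma~\ref{h1=0} yields $h^1(\Pro^2, \mathcal{I}_E(d)) = 0$, so Proposition~\ref{e1} says that a set $S \subseteq B$ of cardinality $w$ is the support of a weight-$w$ codeword of $C(d,a)^\perp$ if and only if $h^1(\Pro^2, \mathcal{I}_{E \cup S}(d)) > 0$ and this $h^1$ strictly drops when any single point is removed from $S$. Throughout I set $Z := E \cup S$, of degree $a+w$.

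For part (1), the range $d+2 \le a+w \le 2d+1$ puts $Z$ in the scope of Lemma~\ref{u00.01}(b), which provides a line $T_1 \subseteq \Pro^2$ with $\deg(T_1 \cap Z) \ge d+2$. I would eliminate the tangent cases using Lemma~\ref{intrette}: (i) $T_1 = L_{X,P_\infty}$ is impossible, since then $T_1 \cap X = \{(q{+}1)P_\infty\}$ would force $\deg(T_1 \cap Z) \le a \le d$; (ii) $T_1$ tangent to $X$ at some $P \neq P_\infty$ is impossible, since then $\deg(T_1 \cap Z) \le 1$. Hence $T_1 \in \mathcal{R}(\infty) \cup \mathcal{R}$. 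In the former case $\deg(T_1 \cap E) = 1$, so $|T_1 \cap S| \ge d+1$ (case~(a)); in the latter $\deg(T_1 \cap E) = 0$, so $|T_1 \cap S| \ge d+2$ (case~(b)). Conversely, any such configuration yields $h^1(\Pro^2, \mathcal{I}_Z(d)) > 0$ by Lemma~\ref{u00.01}(b), and the minimality condition of Proposition~\ref{e1}(2)(c) holds because removing any point of $S$ drops $|T_1 \cap Z|$ below $d+2$, forcing $h^1$ to vanish on the smaller scheme.

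For part (2), the range $2d+2 \le a+w \le 3d-1$ invokes Lemma~\ref{u00.01}(c): either a line $T_1$ with $\deg(T_1 \cap Z) \ge d+2$ (giving enlarged versions of (a), (b)), or a conic $T_2$ with $\deg(T_2 \cap Z) \ge 2d+2$. When $T_2$ is reducible, say $T_2 = L \cup M$, I would distribute $\deg(Z \cap T_2)$ between $L$ and $M$ and classify $(L,M)$ by membership in $\mathcal{R}$ or $\mathcal{R}(\infty)$: a strict split with one side reaching $d+2$ reproduces case~(c), while the balanced split $d+1,d+1$ (the only remaining option when neither line alone achieves $d+2$) gives case~(d); the clause $L \cap M \cap S = \emptyset$ is exactly what prevents a common point of $S$ from being double-counted. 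When $T_2$ is smooth, Bezout ($\deg(T_2 \cap X) = 2(q+1)$) combined with Lemma~\ref{intcurve} (which controls how $T_2$ can absorb the fat point $E$) produces precisely the configuration of case~(e).

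Two technical obstacles dominate. First, one must systematically exclude $L_{X,P_\infty}$ as a component of any reducible $T_2$: if it were, Bezout together with Lemma~\ref{intrette} would force the residual line to contain more rational points of $X$ than is possible. Second, and more delicate, is the minimality condition of Proposition~\ref{e1}(2)(c) in the conic subcases. This is a Cayley--Bacharach-type check: for each $P \in S$ I would exploit the short exact sequence
$$0 \to \mathcal{I}_{E \cup S}(d) \to \mathcal{I}_{(E \cup S) \setminus \{P\}}(d) \to \mathcal{O}_P \to 0$$
and verify that some degree-$d$ form vanishes on $(E \cup S) \setminus \{P\}$ but not at $P$. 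The sheaf-theoretic bookkeeping required across the several geometric branches of case~(2) is precisely what makes the argument technical enough to warrant the separate appendix.
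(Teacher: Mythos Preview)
Your overall strategy---Proposition~\ref{e1} plus Lemma~\ref{u00.01}---is the right framework, but there is a genuine gap at the heart of both parts: you never establish that $S$ is \emph{contained} in the line or conic produced by Lemma~\ref{u00.01}. That lemma only guarantees a curve $T$ with $\deg(T \cap (E \cup S))$ large; it does not say $S \subseteq T$. In part~(1), for instance, you conclude ``case~(a)'' from $|T_1 \cap S| \ge d+1$, but case~(a) asserts that \emph{all} $w$ points of $S$ lie on $T_1$, and $w$ may well exceed $d+1$. The paper closes this gap with a residual-scheme argument (Lemma~\ref{c1} in the appendix): one checks that $\deg(\mbox{Res}_{T}(E \cup S))$ is small enough to force $h^1(\Pro^2,\mathcal{I}_{\mbox{Res}_T(E\cup S)}(d-\deg T))=0$, and then deduces $S \subseteq T$ because every codeword supported on $S$ is already supported on $S \cap T$.

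This omission propagates into a structural error in your part~(2). The dichotomy ``line condition $\Rightarrow$ (a),(b); conic condition $\Rightarrow$ (c),(d),(e)'' cannot produce case~(c) at all. Case~(c) is precisely the situation in which a line $L$ with $\deg(L \cap (E\cup S)) \ge d+2$ \emph{does} exist but $S \not\subseteq L$. The paper detects this by computing $h^1$ of the residual $\mbox{Res}_L(E\cup S)$ in degree $d-1$: if it vanishes, Lemma~\ref{c1} gives $S\subseteq L$ and we land in (a) or (b); if it is positive, Lemma~\ref{u00.01} applied to the residual (whose degree is at most $2(d-1)+1$) produces the second line $M$, and a further application of Lemma~\ref{c1} with $T = L\cup M$ yields $S \subseteq L\cup M$. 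Your reducible-conic route cannot recover this, since once you assume the line condition fails no component of $T_2$ can reach degree $d+2$.

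A smaller issue: in the converse of part~(1) you claim that removing any point of $S$ drops $|T_1\cap Z|$ below $d+2$ and forces $h^1$ to vanish. This is false whenever $w$ is strictly larger than the minimum allowed. What Proposition~\ref{e1}(2)(c) actually demands is a \emph{strict decrease} of $h^1$, and the paper obtains it from the exact formula $h^1 = \max\{0,\deg(E\cap L)+\sharp(L\cap S)-d-1\}$ of Lemma~\ref{e7}, which drops by one each time a point of $S\subseteq L$ is removed.
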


\begin{proof}
 See Appendix \ref{appx}.
\end{proof}

\begin{remark}
Notice that the number of the small-weight codewords of a $C(d,a)^\perp$
code cannot be derived here from the number of their supports, as in the proofs
of Theorem \ref{minoreq} and Theorem \ref{te2}. Indeed, two small-weight
codewords having the same support don't need to be proportional.
\end{remark}

\appendix

\section{Proof of Theorem \ref{finale}} \label{appx}

Here we prove Theorem \ref{finale}. We split the argument in some preliminary
lemmas of geometric content. We follow the notation of the rest of the paper.

\begin{lemma} \label{e2}
 Let $0<d<q+1$ and $0 \le a \le d$ be integers. Consider the Hermitian one-point
code $C(d,a)$. Set $B:=X(\F_{q^2})\setminus \{P_\infty \}$, $E:=aP_\infty$. Fix
a subset $S \subseteq B$ and an integer $e>0$. There exists a linear subspace of
$C(d,a)^\perp$ with support contained in $S$ if and only if
$h^1(\Pro^2,\mathcal{I}_{E\cup S}(d)) \ge e$.
\end{lemma}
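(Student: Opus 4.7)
The plan is to translate the assertion about codewords of $C(d,a)^\perp$ supported in $S$ into a cohomological computation via a standard dimension count on the evaluation map, exploiting projective normality of $X$ together with Lemma \ref{h1=0}. I read the statement as asserting the existence of a linear subspace of dimension $e$ supported in $S$, since this is what the inequality on the right must correspond to.

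First I would set $\text{ev}_S : H^0(X,\Ol_X(d)(-E)) \to \F_{q^2}^S$ for the restriction to the points of $S$ of the evaluation map that defines $C(d,a)$. A codeword of $C(d,a)^\perp \subseteq \F_{q^2}^B$ with support contained in $S$ is exactly a linear functional on $\F_{q^2}^S$ that annihilates $\text{im}(\text{ev}_S)$. Hence the subspace of such codewords has dimension $\sharp(S) - \dim \text{im}(\text{ev}_S)$, and the task reduces to matching this number with $h^1(\Pro^2, \mathcal{I}_{E \cup S}(d))$.

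Next, exactly as in the proof of Proposition \ref{e1}, the hypothesis $d < q+1 = \deg(X)$ together with projective normality of the smooth plane curve $X$ gives natural isomorphisms $H^0(X,\Ol_X(d)(-E)) \cong H^0(\Pro^2, \mathcal{I}_E(d))$ and $\ker(\text{ev}_S) \cong H^0(\Pro^2, \mathcal{I}_{E \cup S}(d))$. Because $S \cap E_{\text{red}} = \emptyset$, the quotient ideal sheaf $\mathcal{I}_E/\mathcal{I}_{E \cup S}$ is the skyscraper sheaf $\mathcal{O}_S$ of total length $\sharp(S)$, so we get the short exact sequence
$$0 \to \mathcal{I}_{E \cup S}(d) \to \mathcal{I}_E(d) \to \mathcal{O}_S \to 0.$$
Its long exact cohomology sequence, together with $H^1(\Pro^2,\mathcal{O}_S)=0$ (the sheaf is zero-dimensional), yields
$$\sharp(S) - \dim \text{im}(\text{ev}_S) = h^1(\Pro^2, \mathcal{I}_{E \cup S}(d)) - h^1(\Pro^2, \mathcal{I}_E(d)).$$

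Finally, Lemma \ref{h1=0} applies (since $0 \le a \le d$) to give $h^1(\Pro^2, \mathcal{I}_E(d)) = 0$, whence the dimension of the subspace of $C(d,a)^\perp$ supported in $S$ equals $h^1(\Pro^2, \mathcal{I}_{E \cup S}(d))$. The existence of a linear subspace of dimension $e$ of $C(d,a)^\perp$ with support contained in $S$ is therefore equivalent to $h^1(\Pro^2, \mathcal{I}_{E \cup S}(d)) \ge e$. I do not expect a real obstacle: all ingredients are either stated in the excerpt (Lemmas \ref{h1=0}, \ref{intrette}) or already deployed in the proof of Proposition \ref{e1}; the only point to check with care is that the cokernel sheaf is genuinely $\mathcal{O}_S$, which is immediate from the disjointness $S \cap E = \emptyset$.
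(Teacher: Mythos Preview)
Your proof is correct and follows essentially the same route as the paper: both identify the dimension of the subspace of $C(d,a)^\perp$ supported in $S$ with $\sharp(S)-\dim\mathrm{im}(\mathrm{ev}_S)$, pass to $\mathbb{P}^2$ via projective normality exactly as in Proposition~\ref{e1}, and conclude using Lemma~\ref{h1=0}. The only cosmetic difference is that you package the final dimension count through the short exact sequence $0\to\mathcal{I}_{E\cup S}(d)\to\mathcal{I}_E(d)\to\mathcal{O}_S\to 0$, whereas the paper writes the same identity directly in terms of $h^0$'s; the content is identical.
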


\begin{proof}
 Set $V:= H^0(X,\mathcal {O}_X(d)(-E))$ and $V(-S):=
H^0(X,\mathcal {I}_{S\cup E}(d))$. Write $B = S\sqcup (B\setminus S)$ and
identify $K^S = \{S\to K\}$ with $K^S\times K^{B\setminus  S}$. The linear
projection
of $K^B$ onto its factor $K^S$ and the inclusion $V\hookrightarrow K^B$ induce
an inclusion $V/V(-S)\hookrightarrow \mathbb {F}_q^{B\setminus S}$. Fix $f\in
K^B$
with support on $S$. By the latter assumption we have $\sum _{P\in B} f(P)g(P) =
\sum _{P\in S} f(P)g(P)$ for all $g\in K^B$. The integer
$i(V,S):= \sharp (S) - h^0(X,\mathcal {O}_X(d)(-E))  +h^0(X,\mathcal
{O}_X(d)(-E-S))$
is the number of independent linear relations among the evaluations of $V$ at
the points of $S$.
Hence $i(V,B)$ is the dimension of the linear subspace of $C^{\bot}$ formed by
the words with support on $S$. 
As in the proof of Proposition \ref{e1} the restriction map
$\rho : H^0(\mathbb {P}^2,\mathcal {I}_E(d)) \to H^0(X,\mathcal {O}_X(-E))$ is
bijective.
Obviously $\mbox{Ker}(\rho )=H^0(\mathbb {P}^2,\mathcal {I}_X(d))$.
Since $i(V,S)$ is the number of conditions that $S$ imposes to $H^0(X,\mathcal
{O}_X(-E))$ and $S\subseteq X$,
we get $i(V,S) = h^0(\mathbb {P}^2,\mathcal {I}_E(d)) -h^0(\mathbb
{P}^2,\mathcal {I}_{E\cup S}(d))$.
Since $S\cap E =\emptyset$ and $h^1(\mathbb {P}^2,\mathcal {I}_E(d))=0$ (Lemma
\ref{h1=0}),
we have $i(V,S) = h^1(\mathbb {P}^2,\mathcal {I}_{S\cup E}(d))$.
\end{proof}

\begin{lemma}\label{e3}
Consider a code $C(d,a)$ with $0<d<q+1$ and $0 \le a \le d$. Set $E:=aP_\infty$.
For any integer $h$ such that
$1\le h \le \binom{d+2}{2} -\deg (E)$ the smallest minimum distance of a subcode
$C^\bot_h \subseteq C^\bot$ of dimension $h$  is the minimal cardinality of a
set $S\subseteq B$
such that $h^1(\mathbb {P}^2,\mathcal {I}_{S\cup E}(d)) \ge h$.
\end{lemma}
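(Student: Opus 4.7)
The plan is to apply Lemma \ref{e2} in both directions, interpreting the left-hand side as the $h$-th support weight of $C(d,a)^\perp$, i.e., the minimum of $|\mathrm{supp}(C_h^\perp)|$ taken over $h$-dimensional subcodes $C_h^\perp\subseteq C(d,a)^\perp$. This is the natural reading that makes both sides match via Lemma \ref{e2}, which (after noting that $h^1(\mathbb{P}^2,\mathcal{I}_E(d))=0$ by Lemma \ref{h1=0}) equates the dimension of the subspace of $C(d,a)^\perp$ formed by words supported on $S\subseteq B$ with the cohomological quantity $h^1(\mathbb{P}^2,\mathcal{I}_{S\cup E}(d))$.

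Write $E:=aP_\infty$ and denote by $\mu(h)$ the left-hand side of the claimed identity and by $\nu(h)$ the right-hand side. First I would prove $\mu(h)\le\nu(h)$. Choose $S\subseteq B$ achieving $\nu(h)$, so that $|S|=\nu(h)$ and $h^1(\mathbb{P}^2,\mathcal{I}_{S\cup E}(d))\ge h$. By Lemma \ref{e2} the subspace $V_S\subseteq C(d,a)^\perp$ of codewords with support inside $S$ has dimension exactly $h^1(\mathbb{P}^2,\mathcal{I}_{S\cup E}(d))\ge h$. Picking any $h$-dimensional $C_h^\perp\subseteq V_S$ produces a subcode of $C(d,a)^\perp$ of dimension $h$ and support contained in $S$, whence $\mu(h)\le|S|=\nu(h)$.

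For the reverse inequality $\nu(h)\le\mu(h)$, I would take an $h$-dimensional subcode $C_h^\perp\subseteq C(d,a)^\perp$ realising $\mu(h)$ and set $S:=\mathrm{supp}(C_h^\perp)\subseteq B$, so $|S|=\mu(h)$. All of $C_h^\perp$ is supported on $S$, so applying Lemma \ref{e2} to this $S$ forces $h^1(\mathbb{P}^2,\mathcal{I}_{S\cup E}(d))\ge\dim C_h^\perp=h$; hence $S$ is an admissible competitor in the definition of $\nu(h)$, giving $\nu(h)\le|S|=\mu(h)$.

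The argument is essentially a direct transcription of Lemma \ref{e2}, so there is no conceptually hard step; the only thing to monitor is the range hypothesis $1\le h\le\binom{d+2}{2}-\deg(E)$. Since $\binom{d+2}{2}-\deg(E)=h^0(\mathbb{P}^2,\mathcal{I}_E(d))$ (by Lemma \ref{h1=0}), the estimate $h^1(\mathbb{P}^2,\mathcal{I}_{S\cup E}(d))\ge|S|+\deg(E)-\binom{d+2}{2}$ coming from the structure sequence of $S\cup E$ in $\mathbb{P}^2$ ensures that admissible sets $S\subseteq B$ exist for every such $h$ (using that $|B|=q^3$ is large enough), so both $\mu(h)$ and $\nu(h)$ are finite and the equality is meaningful.
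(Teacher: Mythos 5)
Your argument is correct and is exactly the paper's proof, which consists of the single line ``Apply Lemma \ref{e2}'': you read the left-hand side as the $h$-th generalized Hamming (support) weight of $C(d,a)^\perp$ and use Lemma \ref{e2} in both directions, which is the intended reading. The closing remark about existence of admissible $S$ for every $h$ in the stated range is not needed for the equality (both sides are simultaneously finite or not) and is in fact slightly too optimistic for very small $q$, but this does not affect the proof.
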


\begin{proof}
Apply Lemma \ref{e2}.
\end{proof}

\begin{remark}\label{e4}
Let $W$ be any projective scheme and $L$ a line bundle on it.  Fix any subscheme
$E\subseteq Z$. Since $Z$ is zero-dimensional we have $h^1(Z,\mathcal
{I}_{E,Z}\otimes L) >0$. Hence 
the restriction
map $H^0(Z,L\vert Z) \to H^0(E,L\vert E)$ is surjective. It follows that if
$h^1(W,\mathcal {I}_W\otimes
L) >0$ then $h^1(W,\mathcal {I}_Z\otimes L) >0$. 
\end{remark}

\begin{remark}\label{e5}
For any effective divisor $T\subseteq \mathbb {P}^2$ and any zero-dimensional
subscheme $Z\subseteq \mathbb {P}^2$
let $\mbox{Res}_T(Z)$ denote the residual scheme of $Z$ with respect to $T$,
i.e. the closed subscheme
of $\mathbb {P}^2$ with $\mathcal {I}_Z:\mathcal {I}_T$ as its ideal sheaf. We
have
$\deg (Z) = \deg (Z\cap T)+ \deg (\mbox{Res}_T(Z))$. If $Z = Z_1\sqcup Z_2$ then
$\mbox{Res}_T(Z) = \mbox{Res}_T(Z_1)\sqcup \mbox{Res}_T(Z_2)$. If $Z$ is reduced
(i.e. if $Z$ is a finite set)
then $\mbox{Res}_T(Z) = Z\setminus Z\cap T$. For each $d\in \mathbb {Z}$ we
have an exact sequence
\begin{equation}\label{eqb1}
0 \to \mathcal {I}_{\mbox{Res}_T(Z)}(d-k) \to \mathcal {I}_Z(d) \to \mathcal
{I}_{Z\cap T,T}(d)\to 0,
\end{equation}
where $k:= \deg (T)$. It follows that, for each integer $i\ge 0$, 
\begin{equation}\label{eqb2}
h^i(\mathbb {P}^2,\mathcal {I}_Z(d)) \le h^i(\mathbb {P}^2,\mathcal
{I}_{\mbox{Res}_T(Z)}(d-k))+h^i(T,\mathcal {I}_{Z\cap
T,T}(d)).
\end{equation}
\end{remark}

\begin{lemma}\label{e6}
Let $d>0$ be an integer and $T\subseteq \mathbb {P}^2$ be any divisor of degree
$k \le d+2$. Let $Z\subseteq T$ be any zero-dimensional scheme. Then
$h^1(\mathbb {P}^2,\mathcal {I}_Z(d))
=h^1(T,\mathcal {I}_{Z,T}(d))$. 
\end{lemma}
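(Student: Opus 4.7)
The plan is to deduce the equality directly from a single short exact sequence of sheaves on $\mathbb{P}^2$. Because $Z$ is a closed subscheme of $T$, the ideal sheaf $\mathcal{I}_T$ is contained in $\mathcal{I}_Z$, and the quotient $\mathcal{I}_Z/\mathcal{I}_T$ is the pushforward to $\mathbb{P}^2$ of the ideal sheaf of $Z$ computed inside $T$, which we denote $\mathcal{I}_{Z,T}$. Since $T$ is a divisor of degree $k$, we have $\mathcal{I}_T \cong \mathcal{O}_{\mathbb{P}^2}(-k)$; twisting by $\mathcal{O}(d)$ produces the short exact sequence
$$0 \to \mathcal{O}_{\mathbb{P}^2}(d-k) \to \mathcal{I}_Z(d) \to \mathcal{I}_{Z,T}(d) \to 0.$$

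The next step is to pass to the long exact cohomology sequence on $\mathbb{P}^2$ and isolate the fragment
$$H^1(\mathbb{P}^2,\mathcal{O}(d-k)) \to H^1(\mathbb{P}^2,\mathcal{I}_Z(d)) \to H^1(T,\mathcal{I}_{Z,T}(d)) \to H^2(\mathbb{P}^2,\mathcal{O}(d-k)).$$
To force the middle map to be an isomorphism, I need both outer terms to vanish. The left-hand vanishing $H^1(\mathbb{P}^2,\mathcal{O}(n))=0$ holds for every integer $n$ and costs nothing. The right-hand vanishing is the one point at which the hypothesis $k \le d+2$ enters: by Serre duality on $\mathbb{P}^2$ one has $h^2(\mathbb{P}^2,\mathcal{O}(n)) = h^0(\mathbb{P}^2,\mathcal{O}(-n-3))$, which is zero exactly when $n \ge -2$, and this is precisely the condition $d-k \ge -2$.

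There is no genuine obstacle, since everything reduces to standard cohomology calculations on $\mathbb{P}^2$. The only subtlety worth flagging in the write-up is that the bound $k \le d+2$ is sharp for this approach: already for $k = d+3$ the term $h^2(\mathbb{P}^2,\mathcal{O}(-3)) = 1$ would contribute a non-trivial correction in the long exact sequence, and the two $h^1$'s would in general fail to agree.
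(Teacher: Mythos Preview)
Your proof is correct and essentially identical to the paper's own argument: both use the short exact sequence $0 \to \mathcal{O}_{\mathbb{P}^2}(d-k) \to \mathcal{I}_Z(d) \to \mathcal{I}_{Z,T}(d) \to 0$ arising from $Z\subseteq T$, and then conclude via the vanishing of $h^1(\mathbb{P}^2,\mathcal{O}(d-k))$ and of $h^2(\mathbb{P}^2,\mathcal{O}(d-k))$ (the latter precisely because $d-k\ge -2$). Your additional remark on the sharpness of the bound $k\le d+2$ is a nice observation not present in the paper.
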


\begin{proof}
Since $Z\subseteq T$, we have $\mbox{Res}_T(Z) =\emptyset$. Hence the residual
exact sequence (\ref{eqb1}) becomes the
exact sequence
\begin{eqnarray*}
0 \to \mathcal {O}_{\mathbb {P}^2}(d-t) \to \mathcal {I}_Z(d) \to \mathcal
{I}_{Z,T}(d)\to 0.
\end{eqnarray*}
Use that $h^1(\mathbb {P}^2,\mathcal {O}_{\mathbb {P}^2}(d-k))=0$ and deduce
(since $d-k \ge -2$)
that $h^2(\mathbb {P}^2,\mathcal {O}_{\mathbb {P}^2}(d-k))=0$.
\end{proof}

\begin{lemma}\label{e7}
Let $0<d<q+1$ and $0 \le a \le d$ be integers. Set $E:=aP_\infty$ and fix any
line $L\subseteq \mathbb {P}^2$ and a set $S\subseteq L$. If $\sharp (S)-\sharp
(L\cap
S)+\deg (E)-\deg (E\cap L) \le d$, then 
$$h^1(\mathbb {P}^2,\mathcal {I}_{E\cup S}(d)) = h^1(L,\mathcal {I}_{(E\cup
S)\cap
L,L}(d)) = \max \{0,\deg (E\cap L)+\sharp (L\cap S)-d-1\}.$$
\end{lemma}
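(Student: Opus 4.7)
The plan is to apply the residual exact sequence of Remark~\ref{e5} with $T=L$ and then handle the two equalities separately: the first via a cohomological vanishing, the second via Riemann-Roch on $L\cong \Pro^1$. Concretely, the sequence
$$0 \to \mathcal{I}_{\mbox{Res}_L(E\cup S)}(d-1) \to \mathcal{I}_{E\cup S}(d) \to \mathcal{I}_{(E\cup S)\cap L,\,L}(d) \to 0,$$
together with the vanishing of $H^2(\Pro^2,\mathcal{I}_W(d-1))$ (which is immediate for $d-1\ge -1$ since $W$ is zero-dimensional), reduces the first equality to the claim $h^1(\Pro^2, \mathcal{I}_W(d-1))=0$, where $W:=\mbox{Res}_L(E\cup S)$.

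First I would identify $W$ as a subscheme of $X$. Since residuation distributes over disjoint unions and $E_{red}=\{P_\infty\}$ is disjoint from $S\subseteq B$, we have $W=\mbox{Res}_L(E)\sqcup \mbox{Res}_L(S)$, with $\mbox{Res}_L(S)=S\setminus (S\cap L)$ a set of $\sharp(S)-\sharp(L\cap S)$ rational points of $X$. For $\mbox{Res}_L(E)$, I would work in affine local coordinates $(u,v)$ at $P_\infty$ in which the Hermitian equation reads $v+v^q=u^{q+1}$ and $L_{X,P_\infty}=\{v=0\}$; since $a\le q$, a short computation shows that $E=aP_\infty$ is cut out in $\Pro^2$ by the ideal $(v,u^a)$, and a colon-ideal computation then identifies $\mbox{Res}_L(E)$ as a curvilinear subscheme of $X$ at $P_\infty$ of length $\deg(E)-\deg(E\cap L)$ in all three relevant cases (namely length $0$ if $L=L_{X,P_\infty}$, length $a-1$ if $L$ is any other line through $P_\infty$, and length $a$ if $P_\infty\notin L$). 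Hence $W$ is a zero-dimensional subscheme of $X$ supported on $X(\F_{q^2})$ whose degree is at most $d=(d-1)+1$ by the hypothesis, so Lemma~\ref{u00.01}(a) applied with exponent $d-1$ gives $h^1(\Pro^2,\mathcal{I}_W(d-1))=0$, closing the first equality.

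For the second equality, $(E\cup S)\cap L$ is a zero-dimensional subscheme of $L\cong\Pro^1$ of degree $m:=\deg(E\cap L)+\sharp(L\cap S)$ (the pieces are disjoint as $P_\infty\notin S$), and Riemann-Roch on $\Pro^1$ applied to the ideal sheaf of a length-$m$ subscheme yields $h^1(L,\mathcal{I}(d))=\max\{0,m-d-1\}$, matching the statement exactly. The one step requiring genuine geometric input is the verification that the non-reduced piece $\mbox{Res}_L(E)$ is contained in $X$, so that Lemma~\ref{u00.01}(a) may be applied to $W$; this rests on the explicit form of the Hermitian equation at $P_\infty$ together with the bound $a\le d\le q$. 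Everything else amounts to a standard residual-sequence plus Riemann-Roch computation.
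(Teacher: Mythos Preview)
Your proof is correct and follows the same skeleton as the paper's: apply the residual exact sequence of Remark~\ref{e5} with $T=L$, establish $h^1(\Pro^2,\mathcal{I}_{\mathrm{Res}_L(E\cup S)}(d-1))=0$, and compute the $h^1$ on $L\cong\Pro^1$ directly. The two proofs differ in execution at two points. First, the paper obtains the equality $h^1(\Pro^2,\mathcal{I}_{E\cup S}(d))=h^1(L,\mathcal{I}_{(E\cup S)\cap L,L}(d))$ by combining the upper bound from~(\ref{eqb2}) with the lower bound coming from Remark~\ref{e4} and Lemma~\ref{e6}; you instead read it off the long exact sequence in one stroke, using the vanishing of $H^2(\Pro^2,\mathcal{I}_W(d-1))$. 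Second, for the key vanishing $h^1(\Pro^2,\mathcal{I}_W(d-1))=0$ the paper simply invokes the general fact from \cite{bgi} or \cite{ep} that a zero-dimensional scheme of degree at most $d$ imposes independent conditions on $|\mathcal{O}_{\Pro^2}(d-1)|$, whereas you take the extra step of checking (via a local computation at $P_\infty$) that $\mathrm{Res}_L(E)$ remains a subscheme of $X$, so that Lemma~\ref{u00.01}(a) applies. Your route is more self-contained within the paper's toolkit but costs a local-algebra verification; the paper's route is shorter but leans on external references. One small caveat: Lemma~\ref{u00.01} is stated for strictly positive twist, so your invocation with exponent $d-1$ formally needs $d\ge 2$; the case $d=1$ (where $\deg W\le 1$) is of course trivial and should just be mentioned separately.
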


\begin{proof}
Since $E\cap S=\emptyset$, we have
\begin{enumerate}
 \item[(a)] $\deg (E\cup S) =\deg (E)+\deg (S)$,
\item[(b)] $\deg (\mbox{Res}_L(E\cup S))= \deg (\mbox{Res}_L(E)) + \sharp
(S)-\sharp (S\cap L)$,
\item[(c)] $\deg (L\cap (E\cup S))= \deg (E\cap L) +\sharp (S\cap L)$.
\end{enumerate}
The latter equality gives $$h^1(L,\mathcal {I}_{(E\cup S)\cap
L,L}(d))
= \max \{0,\deg (E\cap L)+\sharp (L\cap S)-d-1\},$$ because $L\cong \mathbb
{P}^1$. Since $\deg (\mbox{Res}_L(E\cup S))
\le d$, we have $h^1(\mathbb {P}^2,\mathcal {I}_{\mbox{Res}_L(E\cup L)}(d-1))=0$
(\cite{bgi}, Lemma 34, or \cite{ep}, Remarque (i) at p. 116). Hence equation
(\ref{eqb2}) leads to the inequality $h^1(\mathbb {P}^2,\mathcal {I}_{E\cup
S}(d)) \le h^1(L,\mathcal {I}_{(E\cup S)\cap
L,L}(d))$. Since $(E\cup S)\cap L \subseteq E\cup S$, Remark \ref{e4} and Lemma
\ref{e6} imply $h^1(\mathbb {P}^2,\mathcal {I}_{E\cup S}(d)) \ge h^1(L,\mathcal
{I}_{(E\cup S)\cap
L,L}(d))$.
\end{proof}

\begin{lemma}\label{c1}
Let $S\subseteq B$ be the support of a codeword of a code $C(d,a)^\bot$ with
$0<d<q+1$ and $0 \le a \le d$. Set $E:=aP_\infty$ and assume
the existence of a plane curve $T$ of degree $k$ such that $h^1(\mathbb
{P}^2,\mathcal {I}_{\mbox{Res}_T(E\cup S)}(d-k))=0$. Then $S\subseteq T$.
\end{lemma}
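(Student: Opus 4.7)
The plan is to argue by contradiction: I will assume that some point $P\in S$ lies off $T$ and then exhibit a form $F\in H^0(\Pro^2,\mathcal{I}_{E\cup(S\setminus\{P\})}(d))$ with $F(P)\neq 0$. Such a form cannot exist if $S$ is the support of a codeword of $C(d,a)^\perp$. Indeed, the proof of Proposition~\ref{e1}, rephrased via Lemma~\ref{e2} (which uses Lemma~\ref{h1=0}), shows that $S$ is the support of some codeword precisely when, for every $P\in S$, the equality $h^0(\Pro^2,\mathcal{I}_{E\cup(S\setminus\{P\})}(d))=h^0(\Pro^2,\mathcal{I}_{E\cup S}(d))$ holds, i.e., every degree-$d$ form vanishing on $E\cup(S\setminus\{P\})$ automatically vanishes at $P$.

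To construct $F$, first I unpack the residual structure. Since $P$ is a reduced $\F_{q^2}$-point, $P\notin T$, and $P\notin E_{\mathrm{red}}=\{P_\infty\}$, the scheme identities in Remark~\ref{e5} give
\[
\mathrm{Res}_T(E\cup S)\;=\;\mathrm{Res}_T\bigl(E\cup(S\setminus\{P\})\bigr)\sqcup\{P\},
\]
with $\{P\}$ appearing as an isolated reduced component. The assumption $h^1(\Pro^2,\mathcal{I}_{\mathrm{Res}_T(E\cup S)}(d-k))=0$ is equivalent to the restriction map $H^0(\Pro^2,\Ol(d-k))\to H^0(\mathrm{Res}_T(E\cup S),\Ol(d-k))$ being surjective, so I may pick $G\in H^0(\Pro^2,\Ol(d-k))$ that vanishes identically on the subscheme $\mathrm{Res}_T(E\cup(S\setminus\{P\}))$ while $G(P)\neq 0$.

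Let $\tau$ be a degree-$k$ equation defining $T$, and set $F:=G\tau$. The defining property of the residual, encoded in the short exact sequence \eqref{eqb1} as the multiplication-by-$\tau$ inclusion $\mathcal{I}_{\mathrm{Res}_T(Z)}(d-k)\hookrightarrow \mathcal{I}_Z(d)$ (applied with $Z=E\cup(S\setminus\{P\})$), yields $F\in H^0(\Pro^2,\mathcal{I}_{E\cup(S\setminus\{P\})}(d))$. On the other hand, $\tau(P)\neq 0$ because $P\notin T$, and $G(P)\neq 0$ by construction, so $F(P)\neq 0$. This contradicts the support characterization recalled above, forcing $S\subseteq T$.

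The only delicate point is verifying that $F=G\tau$ vanishes \emph{scheme-theoretically} on the fat point $E=aP_\infty$, rather than merely set-theoretically at $P_\infty$. This is handled uniformly by the residual identity $\mathcal{I}_{\mathrm{Res}_T(Z)}\cdot(\tau)\subseteq \mathcal{I}_Z$, so no explicit calculation in local coordinates at $P_\infty$ is required; once this is in hand, the rest of the argument is a direct application of the residual exact sequence together with the cohomological description of codeword supports.
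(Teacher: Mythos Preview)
Your proof is correct, but it proceeds differently from the paper's. The paper argues via a dimension count: writing $V(S)$ (resp.\ $V(S\cap T)$) for the subspace of codewords of $C(d,a)^\perp$ supported in $S$ (resp.\ $S\cap T$), Lemma~\ref{e2} gives $\dim V(S)=h^1(\Pro^2,\mathcal{I}_{E\cup S}(d))$ and $\dim V(S\cap T)=h^1(\Pro^2,\mathcal{I}_{E\cup(S\cap T)}(d))$; the residual exact sequence together with the hypothesis on $\mbox{Res}_T(E\cup S)$ forces these two $h^1$'s to coincide, so $V(S)=V(S\cap T)$ and any codeword with support exactly $S$ actually has support contained in $S\cap T$. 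You instead give a direct, constructive contradiction: assuming $P\in S\setminus T$, you build an explicit form $F=G\tau$ of degree $d$ vanishing on $E\cup(S\setminus\{P\})$ but not at $P$, which is impossible if $S$ is the exact support of a codeword. Both arguments rest on the same residual mechanism (multiplication by $\tau$ maps $\mathcal{I}_{\mbox{Res}_T(Z)}(d-k)$ into $\mathcal{I}_Z(d)$). Your route is more elementary and avoids Lemma~\ref{e2} entirely; the paper's route is more structural and yields the stronger conclusion $V(S)=V(S\cap T)$, not just the containment $S\subseteq T$.
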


\begin{proof}
Let $V(S)$ (resp. $V(S\cap T)$) be the subcode of $C(d,a)^\bot$ formed
by the codewords whose support is contained in $S$ (resp. in $S\cap T$). We have
to prove that $V(S) = V(S\cap T)$. Obviously $V(S\cap T) \subseteq V(S)$. From
the sequence (\ref{eqb1}) we get $h^1(\mathbb {P}^2,\mathcal {I}_{E\cup S}(d)) =
h^1(\mathbb {P}^2,\mathcal {I}_{T\cap (E\cup S)}(d))$. Hence
Lemma \ref{e2} applied to $S\cap T$ and to $S$ gives  $V(S)\subseteq V(S\cap
T)$.
\end{proof}

\begin{proof}[Proof of Theorem \ref{finale}]
Let us divide our proof into several steps. 
\begin{enumerate}
\item Let $S \subseteq B$ be the support of a codeword of weight $w$ of
$C(d,a)^\perp$. Observe that $\sharp(S)=w$. By Proposition \ref{e1} we have
$h^1(\Pro^2,\mathcal{I}_{E\cup S}(d))>0$. Assume $d+2 \le a+w \le 2d+1$, i.e.
$\deg(E \cup S) \le 2d+1$. By Lemma \ref{u00.01} there exists a line $L
\subseteq \Pro^2$ (defined over $\overline{\F_{q^2}}$) such that $\deg(L \cap (E
\cup S)) \ge d+2$. Since $\deg (\mbox{Res}_L(E\cup S))\le 2d+1-d-2 \le d$, the
case $k= 1$ of Lemma \ref{c1} implies  $S\subseteq L$. Since $S \neq \emptyset$
and each point of $S$ is defined over $\F_{q^2}$ then also $L$ is defined over
$\F_{q^2}$. Set $W:=L \cap (E \cup S)$ and note that the multiplicity of
$P_\infty$ in $W$, say $e_W(P_\infty)$, must satisfies $e_W(P_\infty) \le 1$.
Indeed, if $e_W(P_\infty) \ge 2$ then Lemma \ref{intrette} implies
$L=L_{X,P_\infty}$, which contradicts $\deg(W) \ge d+2$ (we assumed $a \le d$).
Hence we have $\sharp(L \cap S) \ge d+1$ and the support $S$ consists  of $w$
points in $L\cap B$ for a 
certain $L \in \mathcal{R}(\infty) \sqcup \mathcal{R}$. On the other hand, let
$L \in \mathcal{R}(\infty) \sqcup \mathcal{R}$ and let $S \subseteq B \cap L$
with $\sharp(S)=w$. Assume $a+w \le 2d+1$. Observe that $\sharp(S)-\sharp(L \cap
S)+\deg(E)-\deg(E \cap L) \le w-w+a \le d$ and hence by Lemma \ref{e7} we have
$h^1(\Pro^2,\mathcal{I}_{E\cup S}(d))>h^1(\Pro^2,\mathcal{I}_{E\cup S'}(d))$ for
any $S' \varsubsetneq S$. Apply Proposition \ref{e1} and deduce that $S$ appears
as the support of a codeword of $C(d,a)^\perp$ of weight $w$.

\item Let $S \subseteq B$ be the support of a codeword of weight $w$ of
$C(d,a)^\perp$. Observe that $\sharp(S)=w$. By Proposition \ref{e1} we have
$h^1(\Pro^2,\mathcal{I}_{E\cup S}(d))>0$. Assume $2d+2 \le a+w \le 3d-1$. By
Lemma \ref{u00.01} there exists either a line $L \subseteq \Pro^2$ (defined over
$\overline{\F_{q^2}}$) such that $\deg(L \cap (E \cup S)) \ge d+2$, or a plane
conic $T$ such that $\deg(T \cap (E \cup S)) \ge 2d+2$.

\begin{enumerate}

\item[(2.i)] Assume the existence of a line $L\subseteq \Pro^2$ such that
$\deg(L \cap (E \cup S)) \ge d+2$. If we have
$h^1(\Pro^2,\mathcal{I}_{\mbox{Res}_L(E \cup S)}(d-1))=0$ then Lemma \ref{c1}
implies $S \subseteq L$ and we may repeat the proof of case (A). The support $S$
consists of $w$ points in $L \cap B$ for a certain $L \in \mathcal{R}(\infty)
\sqcup \mathcal{R}$. Every such a line gives a codeword of $C(d,a)$ of weight
$w$. Now assume $h^1(\Pro^2,\mathcal{I}_{\mbox{Res}_L(E \cup S)}(d-1))>0$. Since
$\deg(\mbox{Res}_L(E \cup S)) \le a+w-(d+2) \le 2(d-1)+1$, Lemma \ref{u00.01}
implies the existence of a line $M \subseteq \Pro^2$ such that $\deg(M \cap
\mbox{Res}_L(E \cup S)) \ge (d-1)+2=d+1$. We easily see that $M$ is defined over
$\F_{q^2}$ and not tangent to $X$ in any point (use Lemma \ref{intrette}). Since
$\mbox{Res}_L(S)=S-(S \cap L)$ we get $L \neq M$.
Observe that $\deg((L \cup M)\cap(E \cup S)) = \deg(L \cap (E \cup S))+\deg(M
\cap \mbox{Res}_L(E\cup S)) \ge 2d+3$. Since neither $L$ or $M$ are tangent to
$X$ we have $\deg (E\cap (L \cup M)) \le 2$, with equality if and only if $L,M
\in \mathcal{R}(\infty)$. In this case we have $w \ge 2d+1$ and it will be
(Lemma \ref{intrette}) $d \le q-1$ or $d=q$ and $\deg (E\cap (L \cup M)) \le 1$.
Since $\deg(\mbox{Res}_{L \cup M}(E \cup S)) \le 3d-1-(2d+3) < d-1$, we have
$h^1(\Pro^2,\mathcal{I}_{\mbox{Res}_{L \cup M}(E \cup S)}(d-2))=0$ and applying
Lemma \ref{c1} with $k=2$ we deduce $S \subseteq L \cup M$.

\item[(2.ii)] Assume that there is no line $L \subseteq \Pro^2$ such that
$\deg(L \cap (E \cup S)) \ge d+2$. Then there is a plane conic $T$ (not
neccessairly smooth) such that $\deg(T \cap (E \cup S)) \ge 2d+2$. Since
$\deg(\mbox{Res}_T(E \cup S)) \le 3d-1-(2d+2) \le d-1$ we get
$h^1(\Pro^2,\mathcal{I}_{\mbox{Res}_T(E \cup S)}(d-2))=0$. Lemma \ref{c1}
implies $S \subseteq T$. Assume that $T$ is reducible, say $T=L \cup M$. Since,
by assumption, $\deg(L\cap (E \cup S)) \le d+1$ and $\deg(M\cap (E \cup S)) \le
d+1$ we have $L \neq M$. Since $2d+2= \deg((L \cup M)\cap (E \cup S))=\deg(L
\cap (E \cup S))+\deg(M \cap \mbox{Res}_L(E \cup S))$ we get (by assumption)
$\deg(L \cap (E\cup S))=\deg(M \cap (E\cup S))=d+1$ and $L \cap M \cap
S=\emptyset$. Moreover, if $P_\infty$ appears in $L \cap M$ then $a \ge 2$.
Lemma \ref{intrette} implies that neither $L$ or $M$ can be tangent to $X$ at
any point. Since we assumed $a<d$ then we are done by Lemma \ref{e7}. Now assume
that $T$ is smooth. Since we proved that $S \subseteq T$, Lemma \ref{intcurve}
gives $w=\deg(T \cap S) \ge 2d+2-\min\{ 2,a \}$.
\end{enumerate}
\end{enumerate}

The proof is concluded. \end{proof}

\section*{Conclusion}
The paper describes Hermitian one-point codes from a purely geometric point of
view and provides a geometric interpretation of the dual minimum distance of
such codes. The supports of the dual minimum-weight codewords are geometrically
characterized, leading to precise formulas for their number.
Possible improvements
of the dual codes of Hermitian one-point codes are easily controlled by means of
the geometric setup here presented.
The well-known geometry of tangent lines to the Hermitian curve is applied to
study also some small-weight codewords and their supports.

\end{document}